\documentclass[12pt]{amsart}

\usepackage{amssymb, amsmath}
\usepackage{amsthm}
\usepackage{graphicx}
\usepackage{hyperref}
\hypersetup{
    colorlinks=true,
    linkcolor=blue,
    urlcolor=blue,
    }
\usepackage{comment}
\usepackage{amsrefs}

\usepackage{caption}
\usepackage{subcaption}
\usepackage{xcolor}

\theoremstyle{plain} 
\newtheorem{thm}{Theorem}[section]
\newtheorem{lem}{Lemma}[section]
\newtheorem{prop}{Proposition}[section]
\newtheorem{cor}{Corollary}[section]

\theoremstyle{definition}
\newtheorem{dfn}[thm]{Definition}

\theoremstyle{remark}
\newtheorem{rmk}[thm]{Remark}

\newcommand{\CP}{\mathbf C P^2}
\newcommand{\Z}{\mathbf Z}

\newcommand{\Q}{\mathbf Q}
\newcommand{\spn}[1]{\langle #1 \rangle}
\newcommand{\pvec}[2]{\begin{pmatrix}#1\\#2\end{pmatrix}}
\DeclareMathOperator{\Mod}{Mod}
\DeclareMathOperator{\GL}{GL}
\DeclareMathOperator{\SL}{SL}

\begin{document}
\title[Genus-3 Lefschetz pencils on torus bundles]{Genus-3 Lefschetz pencils on torus bundles}

\author[E. Huang]{Evan Huang}
\address{Department of Mathematics\\
Princeton University\\
 Princeton, NJ, 08544}
\email{evan.huang@princeton.edu}

\begin{abstract}
We use the breeding technique to construct genus-3 Lefschetz pencils for which the total spaces have $c_1=0$. We compute the fundamental groups of our Lefschetz pencils and identify them with the fundamental groups of certain torus bundles over tori. Using work of Friedl and Vidussi, the total spaces of our pencils are homeomorphic to torus bundles over tori. These bundles do not admit sections.
\end{abstract}

\maketitle
\section*{Introduction}
The notion of Kodaira dimension can be extended from K\"ahler surfaces to closed symplectic 4-manifolds \cite{li}, and takes values in $\{-\infty,0,1,2\}$. While the definition of Kodaira dimension $\kappa$ depends on a the symplectic form, it turns out to be a smooth invariant. Manifolds with $\kappa=-\infty$ are completely classified: they are precisely the rational and ruled surfaces (and their blowups) \cite{li2}.

Symplectic 4-manifolds with $\kappa=0$ are called \textit{symplectic Calabi-Yau surfaces} (SCYs), and they are characterized by having a minimal model with torsion canonical class. Minimal SCYs are known to have the rational homology type of the K3 surface, Enriques surface, or a torus bundle over a torus \cite{li}, \cite{bauer}. There is a folklore conjecture that every minimal SCY with $b_1>0$ is diffeomorphic to some torus bundle over a torus.

One way of constructing SCYs is to find a Lefschetz pencil with the right number of base points or equivalently, a Lefschetz fibration with the right number of disjoint $(-1)$-sections \cite{bh}. Baykur \cite{baykur} constructed a family of SCYs using genus-3 pencils and showed that his examples cover all possible integral homology types of torus bundles over tori. In addition, Baykur showed that one of his SCYs was homeomorphic to $T^4$.

Going the other way, Smith showed that if $X$ is the total space of a torus bundle over a torus that has a section, then $X$ admits a genus-3 Lefschetz pencil with 4 base points \cite{smith}. In \cite{hh}, Hamada-Hayano worked out the vanishing cycles of Smith's construction in the case $X=T^4$ and showed this pencil to be equivalent to Baykur's. They also constructed, for each torus bundle $X$ with a section, a genus-3 pencil on a SCY homeomorphic to $X$.

In this paper, we use genus-3 pencils to construct SCYs that are homeomorphic to torus bundles over tori without sections.

\section*{Acknowledgements}
The author would like to thank Zolt\'an Szab\'o for his encouragement and helpful conversations. The author would also like to thank {\.I}nan{\c{c}} Baykur for helpful conversations.

\section*{AI Statement}
No AI was used in the preparation of this paper. The author takes full responsibility for the mathematical content and any errors present.

\section{Torus bundles over tori}
We start by recalling some facts about torus bundles from \cite{sf}. Given $A,B\in\GL(2,\Z)$ with $AB=BA$ and integers $m,n$, there is a $T^2$-bundle over $T^2$ denoted $M(A,B;m,n)$, where $(A,B)$ is the monodromy of the bundle and $(m,n)$ represents the obstruction to having a section. Every $T^2$-bundle over $T^2$ is isomorphic to some $M(A,B;m,n)$, and in addition, we can assume $B=\pm I$. More precisely, every $T^2$-bundle over $T^2$ is isomorphic to some $M(A,I;m,n)$ or $M(A,-I;m,n)$.

\begin{prop}
\label{prop:fundgrp}
(\cite{sf}, Section 1) Let $A=\begin{pmatrix} a & b\\c & d\end{pmatrix},B=\begin{pmatrix} p & q\\r & s\end{pmatrix}\in\GL(2,\Z)$ with $AB=BA$ and $m,n\in\Z$. Put $M=M(A,B;m,n)$. The homotopy exact sequence of a fiber bundle gives the following short exact sequence of groups:
\begin{equation}
\label{eq:ses}
1\to\pi_1F\to\pi_1M\to\pi_1S\to 1.
\end{equation}
$\pi_1M$ is generated by $\alpha,\beta,\sigma,\tau$ with the relations
\begin{align*}
	&\sigma\tau=\tau\sigma & &\alpha^{-1}\sigma\alpha=\sigma^a\tau^c & &\beta^{-1}\sigma\beta=\sigma^p\tau^r\\
	&\alpha\beta\alpha^{-1}\beta^{-1}=\sigma^m\tau^n & &\alpha^{-1}\tau\alpha=\sigma^b\tau^d & &\beta^{-1}\tau\beta=\sigma^q\tau^s.
\end{align*}
Moreover, $\sigma,\tau$ come from the inclusion $\pi_1F\to\pi_1M$ and generate $\pi_1F$, and under the projection $\pi_1M\to\pi_1S$, $\alpha,\beta$ map to generators.
\end{prop}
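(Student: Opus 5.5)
The plan is to build $M=M(A,B;m,n)$ explicitly and then read off $\pi_1M$ with van Kampen; the exact sequence (\ref{eq:ses}) will come out alongside. Write the base as $S=T^2=D\cup(S\setminus D)$, where $D$ is a small disk and $S\setminus D$ deformation retracts onto the wedge $a\vee b$ of two circles. Over $S\setminus D$, take the bundle with monodromies $A$ along $a$ and $B$ along $b$. Since $S\setminus D$ is homotopy equivalent to a wedge of circles, this bundle exists for any pair $A,B$. It is also a $T^2$-bundle over a space with free fundamental group, and it admits a section. Because $AB=BA$, the monodromy around $\partial D$ is the identity, so the boundary is $T^2\times S^1$. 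To close it up, glue in $T^2\times D$ using a fiber-preserving map of $T^2\times S^1$ that is the identity on fibers but twists the boundary section by the class $(m,n)\in\pi_1 T^2$. This twist is the standard obstruction class in $H^2(S;\pi_1F_{A,B})$, and I take it as the definition of $M(A,B;m,n)$ from \cite{sf}.

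First, I compute $\pi_1$ of the bundle $E_0$ over $S\setminus D$. Since $E_0$ is the mapping torus construction over a wedge, $\pi_1E_0$ is the semidirect product $\Z^2\rtimes F(\alpha,\beta)$. Here $\sigma,\tau$ generate the fiber group $\Z^2$, and $\alpha,\beta$ are the lifts of $a,b$ along the chosen section. Conjugation by $\alpha$ and $\beta$ acts on $\Z^2$ through $A$ and $B$. Writing this action with the column convention for the matrices gives exactly the four relations
\[\alpha^{-1}\sigma\alpha=\sigma^a\tau^c,\quad \alpha^{-1}\tau\alpha=\sigma^b\tau^d,\quad \beta^{-1}\sigma\beta=\sigma^p\tau^r,\quad \beta^{-1}\tau\beta=\sigma^q\tau^s,\]
together with $\sigma\tau=\tau\sigma$.

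Next, I glue in $T^2\times D$. Its fundamental group is $\Z^2=\langle\sigma,\tau\rangle$, so van Kampen over the intersection $T^2\times S^1$ does two things. It identifies the fiber generators on both sides. It also kills the image of the loop $\{pt\}\times\partial D$ from the $T^2\times D$ side. In $E_0$, the boundary loop along the section is $[\alpha,\beta]=\alpha\beta\alpha^{-1}\beta^{-1}$. The gluing twist means that this loop corresponds to a loop that is null-homotopic in $T^2\times D$ only after correcting by the fiber element $\sigma^m\tau^n$. This yields the relation $\alpha\beta\alpha^{-1}\beta^{-1}=\sigma^m\tau^n$, and no other relations appear.

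Finally, I derive the short exact sequence. The map $\pi_1M\to\pi_1S$ sends $\sigma,\tau\mapsto1$ and $\alpha,\beta\mapsto a,b$, and it is surjective. Its kernel is the normal closure of $\langle\sigma,\tau\rangle$. By the conjugation relations this is just $\langle\sigma,\tau\rangle$, which is abelian of rank at most two. Injectivity of $\pi_1F\to\pi_1M$ follows from the long exact sequence of the fibration, since $\pi_2S=0$. Alternatively, one can exhibit $\pi_1M$ as an extension directly: the presentation defines a group extension of $\Z^2$ by $\Z^2$ whenever the cocycle condition holds, and here it holds because the relator is compatible with the $A,B$ action.

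The main obstacle is getting the conventions consistent. The sign of the commutator and the left versus right action must be chosen so that the relation is exactly $\alpha\beta\alpha^{-1}\beta^{-1}=\sigma^m\tau^n$, matching \cite{sf}'s definition of the obstruction $(m,n)$. This is largely a matter of fixing orientations of the gluing, and I would simply adopt \cite{sf}'s conventions for it.
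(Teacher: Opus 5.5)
The paper gives no proof of this proposition; it quotes it from \cite{sf}. Your argument therefore supplies a self-contained derivation where the paper relies on the citation. Your route is correct and standard: take the linear-monodromy bundle $E_0$ over $S\setminus D$, use $\pi_1E_0=\Z^2\rtimes F(\alpha,\beta)$ split by the zero section, note $s_*(\partial D)=\alpha\beta\alpha^{-1}\beta^{-1}$, add one extra relation from van Kampen over the collar, and get exactness of (\ref{eq:ses}) from the long exact sequence since $\pi_2T^2=0$ and $T^2$ is connected.

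What the citation buys the paper is the precise meaning of $(m,n)$, which you have to pin down yourself. The gluing map is not the identity on fibers. It is a fiberwise translation, isotopic to the identity and hence trivial on $\pi_1F$, whose translation vector traces a loop of class $(m,n)$. That class is only defined relative to a fixed trivialization of $\partial E_0\cong T^2\times S^1$, e.g.\ the one in which the zero section restricts to $\{0\}\times S^1$, because re-trivializing by a loop of translations shifts $(m,n)$. Once that trivialization and the orientation of $\partial D$ are fixed as in \cite{sf}, your computation yields exactly $\alpha\beta\alpha^{-1}\beta^{-1}=\sigma^m\tau^n$ in the stated column convention.

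Your closing remark about an abstract cocycle condition can be dropped. Once $AB=BA$ there is no further condition, and your construction already realizes every $(m,n)$.
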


\begin{prop}
\label{prop:section}
Let $M=M(A,B;m,n)$, with $A=\begin{pmatrix} a & b\\c & d\end{pmatrix},B=\begin{pmatrix} p & q\\r & s\end{pmatrix}\in\SL(2,\Z)$. Then $M$ has a section if and only if $\binom{m}{n}\in\spn{A-I,B-I}$, the span of the columns of $A-I$ and $B-I$.
\end{prop}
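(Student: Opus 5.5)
Since the fiber $F=T^2$ and base $S=T^2$ are both aspherical, sections of $M\to S$ up to homotopy correspond to splittings of the short exact sequence \eqref{eq:ses}: a section $s$ gives a homomorphism $s_*:\pi_1S\to\pi_1M$ with $p_*s_*=\mathrm{id}$, and conversely a splitting homomorphism is realized by a map $S\to M$ (as $M$ is a $K(\pi,1)$). That map is homotopic over $S$ to a genuine section, because the obstructions to deforming it lie in $H^*(S;\pi_*F)$, and $\pi_k F=0$ for $k\ge2$; alternatively, I would cite the standard identification in \cite{sf} that $(m,n)$ is the obstruction class, with the section existing iff it vanishes in the relevant twisted cohomology $H^2(S;\pi_1F)$. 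So I reduce to the purely algebraic question: using the presentation of Proposition~\ref{prop:fundgrp}, when does there exist a splitting?

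A splitting is determined by the images of the generators of $\pi_1S$, which must be of the form $\alpha'=\alpha\sigma^{x}\tau^{y}$ and $\beta'=\beta\sigma^{z}\tau^{w}$ for some integers $x,y,z,w$, and it is a homomorphism iff $\alpha'\beta'=\beta'\alpha'$. I write the fiber group additively as $\Z^2$ with $v=(x,y)^T$, $u=(z,w)^T$. Using the conjugation relations, conjugation by $\alpha^{-1}$ acts on $\Z^2$ by $A$ and conjugation by $\beta^{-1}$ by $B$ (with appropriate conventions on left versus right action, to be checked against the relations $\alpha^{-1}\sigma\alpha=\sigma^a\tau^c$, etc.). I then compute the commutator $\alpha'\beta'\alpha'^{-1}\beta'^{-1}$, moving all fiber elements to one side. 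The result should be $\sigma^m\tau^n$ times the fiber element $\pm\big((B-I)v-(A-I)u\big)$, possibly after applying an invertible matrix (a product of $A^{\pm1},B^{\pm1}$) to the correction term. The key point is that this bookkeeping produces exactly the columns of $A-I$ and $B-I$.

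Hence a splitting exists iff $\binom{m}{n}$ equals $(A-I)u'-(B-I)v'$ for some integers. The invertible matrix factor $C$ is harmless: it commutes with $A$ and $B$, since they commute, so $C$ preserves the lattice $\spn{A-I,B-I}$. This gives exactly the condition $\binom{m}{n}\in\spn{A-I,B-I}$, which is the coboundary condition of $H^2(T^2;\Z^2_{A,B})=\Z^2/\spn{A-I,B-I}$. The main obstacle is the sign/convention bookkeeping in the commutator computation and confirming that the extra conjugating matrices only multiply the correction by an automorphism preserving the span. The hypothesis of $\SL$ rather than $\GL$ plays no essential role, except possibly in matching the orientation conventions of \cite{sf}.
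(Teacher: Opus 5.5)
Your forward direction (section $\Rightarrow$ span condition) is the paper's argument, and your predicted bookkeeping is right. With $\alpha'=\alpha v$ and $\beta'=\beta u$ one gets $\alpha'\beta'=\alpha\beta\,(Bv+u)$ and $\beta'\alpha'=\beta\alpha\,(Au+v)$. Hence $\alpha',\beta'$ commute if and only if $AB\binom{m}{n}=(A-I)u-(B-I)v$. Since $AB$ commutes with $A$ and $B$, it preserves $\spn{A-I,B-I}$. The paper instead puts the fiber elements on the left, $f(\alpha)=\sigma^x\tau^y\alpha$, and absorbs $-A^{-1},-B^{-1}$ into the vectors.

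The converse is where you differ. The paper cites Proposition 2(3) of \cite{sf}: $M(A,B;m,n)\cong M(A,B;0,0)$ as bundles when $\binom{m}{n}\in\spn{A-I,B-I}$, and the latter has an evident section. You instead note that the same computation is reversible, so the span condition produces a splitting, and then realize that splitting by an honest section using asphericity. Your route is self-contained and gets both directions from one computation. It also never uses $\det=1$, so it proves the statement for $\GL(2,\Z)$. The paper's route is shorter but leans on the classification in \cite{sf}.

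One justification needs fixing, because your obstruction-theory sentence does not work as stated. Over a $2$-dimensional base the only obstruction to a section lies in $H^2(S;\pi_1F)$, which is exactly the group containing the class of $(m,n)$. So the vanishing of $\pi_kF$ for $k\ge 2$ disposes of nothing. The correct argument is simpler:
\begin{itemize}
\item Let $g:S\to M$ realize the splitting.
\item Then $p\circ g$ induces the identity on $\pi_1$ of the aspherical $S$, so $p\circ g\simeq\mathrm{id}_S$.
\item By the homotopy lifting property of the bundle, this homotopy lifts to one starting at $g$ and ending at a map $g'$ with $p\circ g'=\mathrm{id}_S$, i.e.\ a genuine section.
\end{itemize}
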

\begin{proof}
We will use the presentation for $\pi_1M$ given in Proposition \ref{prop:fundgrp}. From the relations between $\alpha$ and $\sigma$, we see that $\alpha^{-1}\sigma^d\alpha=\sigma^{ad}\tau^{cd}$ and $\alpha^{-1}\tau^{-c}\alpha=\sigma^{-bc}\tau^{-cd}$. Putting these together gives
\[
	\alpha^{-1}\sigma^d\tau^{-c}\alpha=\sigma^{ad-bc}=\sigma.
\]
Hence $\alpha\sigma\alpha^{-1}=\sigma^d\tau^{-c}$. Similarly, we can write $\alpha\tau\alpha^{-1},\beta\sigma\beta^{-1}$, and $\beta\tau\beta^{-1}$ in terms of $\sigma$ and $\tau$. Also recall that $\alpha\beta=\sigma^m\tau^n\beta\alpha$.

Note that every element in $\pi_1M$ can be written in the form $\sigma^x\tau^y\beta^w\alpha^z$. Given some product of generators, we can use the above observations to move all the $\alpha$'s to the right (at the expense of picking up extra $\sigma$'s and $\tau$'s), then move all the $\beta$'s to the right.

Now suppose $M$ has a section. This induces a section $f:\pi_1S\to\pi_1M$ in the short exact sequence (\ref{eq:ses}). Using our above observations and the fact that $f$ is a section, we see that $f(\alpha)=\sigma^x\tau^y\alpha$ and $f(\beta)=\sigma^w\tau^z\beta$. Moreover, since $\alpha$ and $\beta$ commute in $\pi_1S$, $f(\alpha)$ and $f(\beta)$ must commute in $\pi_1M$. Thus
\begin{align*}
	&(\sigma^x\tau^y\alpha)(\sigma^w\tau^z\beta)=(\sigma^w\tau^z\beta)(\sigma^x\tau^y\alpha)\\
	&\sigma^x\tau^y(\sigma^d\tau^{-c})^w(\sigma^{-b}\tau^a)^z\alpha\beta=\sigma^w\tau^z(\sigma^s\tau^{-r})^x(\sigma^{-q}\tau^p)^y\beta\alpha\\
	&\alpha\beta\alpha^{-1}\beta^{-1}=\sigma^w\tau^z(\sigma^s\tau^{-r})^x(\sigma^{-q}\tau^p)^y\sigma^{-x}\tau^{-y}(\sigma^d\tau^{-c})^{-w}(\sigma^{-b}\tau^a)^{-z}\\
	&\sigma^m\tau^n=\sigma^w\tau^z(\sigma^s\tau^{-r})^x(\sigma^{-q}\tau^p)^y\sigma^{-x}\tau^{-y}(\sigma^d\tau^{-c})^{-w}(\sigma^{-b}\tau^a)^{-z}.
\end{align*}
This equation now only involves $\sigma$'s and $\tau$'s, and since $\pi_1F\to\pi_1M$ is an inclusion, we see that the powers of $\sigma$ and $\tau$ must match up. Writing $\binom{k}{l}$ to denote $\sigma^k\tau^l$, the above equation simplifies to
\begin{align*}
	\pvec{m}{n}&=\pvec{w}{z}+x\pvec{s}{-r}+y\pvec{-q}{p}-\pvec{x}{y}-w\pvec{d}{-c}-z\pvec{-b}{a}\\
	&=\begin{pmatrix}s & -q\\-r & p\end{pmatrix}\pvec{x}{y}-\pvec{x}{y}-\begin{pmatrix}d & -b\\-c & a\end{pmatrix}\pvec{w}{z}+\pvec{w}{z}\\
	&=(B^{-1}-I)v_1-(A^{-1}-I)v_2\\
	&=(B-I)v_1'+(A-I)v_2',
\end{align*}
where $v_1=\binom{x}{y}$, $v_2=\binom{w}{z}$, $v_1'=-B^{-1}v_1$, and $v_2'=-A^{-1}v_2$. Hence $\binom{m}{n}\in\spn{A-I,B-I}$.

Conversely, if $\binom{m}{n}\in\spn{A-I,B-I}$, then according to Proposition 2(3) of \cite{sf}, $M(A,B;m,n)$ is isomorphic to $M(A,B;0,0)$ as bundles, and $M(A,B;0,0)$ has a section by construction.
\end{proof}

\section{Symplectic Calabi-Yaus}
We now recall some facts about the symplectic Kodaira dimension, based on \cite{li}. Given a symplectic manifold $(X,\omega)$, let $(X_0,\omega_0)$ be a minimal model with canonical class $K_{X_0}$. The \textit{symplectic Kodaira dimension} $\kappa(X,\omega)$ is defined to be either $-\infty,0,1$, or $2$, depending on the signs of $K_{X_0}\cdot[\omega_0]$ and $K_{X_0}^2$. The definition of $\kappa$ is independent of the minimal model and of the particular symplectic form and is hence a smooth invariant.
\begin{dfn}
A symplectic 4-manifold $X$ is called a \textit{symplectic Calabi-Yau surface}, or SCY for short, if $\kappa(X)=0$. As observed by Li (\cite{li} Proposition 2.4), if $X_0$ is a minimal model of $X$ and $K_{X_0}$ its canonical class, then $\kappa=0$ is equivalent to saying that $K_{X_0}$ is torsion in $H^2(X_0;\Z)$.
\end{dfn}
\begin{rmk}
\label{rmk:scy}
Note that if $X$ is a symplectic 4-manifold with torsion canonical class $K$, then either $K$ is actually trivial or $X$ is an integral homology Enriques surface (in which case $K$ has order two); see Proposition 6.3 in \cite{li}. In this paper, all SCYs under consideration have $b_1\ge 2$ (since $b_1\ge 2$ for torus bundles; see \cite{sf}, Proposition 1), which rules out integral homology Enriques surfaces and their blowups, so our SCYs all have minimal models with $K=0$.

Observe that if $X$ is a SCY and $X=X_0\#n\overline\CP$, with $X_0$ minimal, then $0=c_1^2(X_0)=c_1^2(X)+n$. Hence if $X$ is a SCY and $c_1^2(X)=0$, then $X$ is already minimal. In this paper, all our SCYs will have $c_1^2=0$ by construction, and thus our SCYs are minimal. Combining this with our previous observation, all our SCYs have $K=0$. This will be implicit every time we invoke Corollary \ref{cor:homeo}.
\end{rmk}

The following gives a good source of SCYs:
\begin{prop}
(\cite{baykur}, Proposition 1; see also \cite{bh}, Theorem 4.1) Suppose $(X,f)$ is a genus-$g$ Lefschetz pencil with $b$ base points and $X$ is not rational or ruled. Then $X$ is a SCY or a rational homology Enriques surface if and only if $b=2g-2$.
\end{prop}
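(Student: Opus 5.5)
Blowing up the $b$ base points turns the pencil into a genus-$g$ Lefschetz fibration $\tilde X = X\#b\overline{\CP}\to S^2$ with $b$ disjoint $(-1)$-sections $S_1,\dots,S_b$. I will compute $K_{\tilde X}\cdot[\omega]$ and $K^2$ on the minimal model, and reduce the statement to a single numerical identity forced by adjunction applied to a fiber and to the sections.

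\textbf{Step 1: adjunction on $\tilde X$.} Let $F$ be the fiber class, so $F^2=0$ and $F$ is a symplectic surface of genus $g$. Adjunction gives $K_{\tilde X}\cdot F=2g-2$. Each $S_i$ is a symplectic sphere of square $-1$, so $K_{\tilde X}\cdot S_i=-1$. Moreover $S_i\cdot F=1$ and $S_i\cdot S_j=0$ for $i\neq j$. In $X$ itself, a pencil fiber is $F_X=F+\sum S_i$ with $F_X^2=b$. Since $K_{\tilde X}=K_X+\sum E_i$ with $E_i=S_i$, we get
\[ K_X\cdot F_X = 2g-2 - b, \]
which is just adjunction for the genus-$g$ surface $F_X$ of square $b$ in $X$. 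The class $F_X$ is Poincar\'e dual to a multiple of a symplectic form (Gompf--Donaldson), so $K_X\cdot[\omega]$ has the sign of $2g-2-b$.

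\textbf{Step 2: pass to the minimal model.} Since $X$ is not rational or ruled, the theory recalled from \cite{li} applies: a minimal model $X_0$ exists with $\kappa\ge 0$, and $X=X_0\#k\overline{\CP}$ with $K_X=K_{X_0}+\sum e_j$, where the $e_j$ are exceptional classes. A known fact (Li, or the exceptional-sphere results of \cite{bh}/Sato) is that in a non-rational, non-ruled manifold any exceptional class pairs positively with the pencil fiber $F_X$. Equivalently, the exceptional spheres can be made symplectic and $J$-holomorphic, so they meet the fiber positively; they cannot be fiber components of square $-1$ in an honest pencil, or if they are, one still has $e_j\cdot F_X\ge 1$ from the base points. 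This gives
\[ K_{X_0}\cdot[\omega_0] \le K_X\cdot F_X - k\ \text{(suitably normalized)}. \]

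\textbf{Step 3: both directions.}
\begin{itemize}
\item[($\Rightarrow$)] If $X$ is SCY or a rational homology Enriques surface, then $K_{X_0}$ is torsion, so $K_X\cdot F_X=\sum e_j\cdot F_X\ge 0$, giving $b\le 2g-2$. For the reverse inequality I would use the bound $b\le 2g-2$ for pencils on non-ruled manifolds in the other direction. Alternatively, and more cleanly, invoke Sato/Baykur–Hayano's theorem: if $\kappa\ge0$, then every $(-1)$-sphere pairs with the pencil's exceptional structure, and $X$ is minimal after removing only non-base exceptional spheres, so $K_X\cdot F_X=0$ exactly.
\item[($\Leftarrow$)] If $b=2g-2$, then $K_X\cdot[\omega]=0$. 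Combined with $\kappa\ge 0$, which holds because $X$ is not rational or ruled, and the monotonicity of $K\cdot\omega$ under blowdown, this gives $K_{X_0}\cdot[\omega_0]\le 0$. Hence $\kappa=0$, which means $X$ is SCY, or a rational homology Enriques surface if $K$ is torsion but nonzero.
\end{itemize}

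\textbf{Main obstacle.} The hard step is controlling the exceptional classes of $X$ that are not base-point blowups, that is, proving $e_j\cdot F_X$ behaves as needed. I would cite \cite{bh}, Theorem 4.1, which establishes exactly that for $\kappa\ge0$ the total space of a pencil is minimal up to such spheres. I expect the $(\Rightarrow)$ direction to rest on this citation rather than on a self-contained argument.
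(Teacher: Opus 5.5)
\textbf{Gap in the forward direction.} The paper only cites this result, so I am judging your argument on its own. Your $(\Leftarrow)$ direction is essentially right, but $(\Rightarrow)$ has a genuine gap, and it cannot be closed the way you suggest. Assuming only that $K_{X_0}$ is torsion, your computation gives
\[ 2g-2-b \;=\; K_X\cdot F_X \;=\; \sum_{j=1}^k e_j\cdot F_X \;\ge\; k\;\ge\;0, \]
so $b\le 2g-2$, with equality \emph{exactly} when $X$ has no exceptional classes. The ``reverse inequality'' you still need is therefore equivalent to minimality of $X$, and $\kappa(X)=0$ does not imply minimality.

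For a counterexample, blow up one base point of Baykur's genus-$3$ pencil with $4$ base points on the SCY homeomorphic to $T^4$. The exceptional sphere becomes a section, so $X\#\overline{\CP}$ carries a genus-$3$ pencil with $3\neq 2g-2$ base points. Yet $\kappa(X\#\overline{\CP})=0$ and $b_1=4$, so it is not a rational homology Enriques surface either. Hence no Sato or Baykur--Hayano theorem can supply the minimality you invoke, and using $b\le 2g-2$ ``in the other direction'' yields nothing. Read literally with this paper's definition of SCY as $\kappa=0$, which allows blowups, the forward implication is false.

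\textbf{Correct reading and fix.} The statement is correct when ``SCY or rational homology Enriques surface'' refers to $X$ itself having torsion canonical class $K_X$, i.e.\ to \emph{minimal} $\kappa=0$ manifolds. That hypothesis already excludes exceptional classes, since $K_X\cdot E=-1\neq 0$ for any such $E$. Then $(\Rightarrow)$ is one line of adjunction for the pencil fiber: $0=K_X\cdot F_X=2g-2-b$. No minimal-model argument is needed there.

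\textbf{Tightening the reverse direction.} For $(\Leftarrow)$, two points need fixing.
\begin{enumerate}
\item The positivity $e_j\cdot F_X>0$ needs no $J$-holomorphic discussion; your sentence about fiber components and base points is not coherent. Take Gompf's form with $[\omega]=PD(F_X)$ and let the $e_j$ be $\omega$-exceptional classes, so $e_j\cdot F_X=\omega(e_j)>0$. Blowing down gives $[\omega]=[\omega_0]-\sum_j\omega(e_j)e_j$ with $K_{X_0}\cdot e_j=0$. Hence $0=K_X\cdot[\omega]=K_{X_0}\cdot[\omega_0]+\sum_j\omega(e_j)$.
\item Both terms are nonnegative because $\kappa\ge 0$, so $K_{X_0}\cdot[\omega_0]=0$ \emph{and} $k=0$. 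Then $K_X^2\ge 0$ together with the light cone lemma forces $K_X^2=0$. This gives $\kappa=0$ with $X$ minimal, which is precisely the form in which the equivalence holds.
\end{enumerate}
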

It turns out that a large class of SCYs are determined up to homeomorphism by their fundamental group. We recall the following from \cite{fv} (note that in their language, a SCY is required to have $K=0$, which excludes nonminimal 4-manifolds and integral homology Enriques surfaces):
\begin{dfn}
Let $G$ be a finitely-presented group. If $G=\pi_1X$ for some SCY $X$ such that $K_X=0$, then we say that $G$ is a \textit{SCY group}.
\end{dfn}
\begin{thm}
(\cite{fv}, Corollary 3.3) Let $X$ be a SCY with $K_X=0$, and assume further that $X$ is an infrasolvmanifold. If $Y$ is a SCY with $K_Y=0$ and $\pi_1Y\cong\pi_1X$, then $X$ and $Y$ are homeomorphic.
\end{thm}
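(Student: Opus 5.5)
This statement is a result cited from \cite{fv}, so the plan is to reconstruct the argument Friedl and Vidussi use rather than to derive it from anything earlier in this paper. The overall strategy is to reduce homeomorphism classification to the Borel conjecture for infrasolvmanifolds, which is known to hold. The route has three steps. First, show that $Y$ is aspherical. Second, show that $Y$ is homotopy equivalent to $X$. Third, invoke topological rigidity for infrasolvmanifolds, due to Farrell--Jones, which says that any closed manifold homotopy equivalent to an infrasolvmanifold is homeomorphic to it.

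For the first two steps, I would start from $G=\pi_1X\cong\pi_1Y$. Because $X$ is an infrasolvmanifold, $G$ is virtually polycyclic, and in dimension $4$ it is torsion-free with Hirsch length $4$. I would then pass to a finite cover $\tilde X$ whose fundamental group is poly-$\Z$ and take the corresponding cover $\tilde Y$. Both covers are again SCYs with $K=0$, since finite covers of symplectic manifolds with trivial canonical class still have trivial canonical class.

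The key input is the known structure of SCYs with $K=0$ whose fundamental group is residually finite and has large $b_1$ in finite covers. Here I would use the result of Friedl--Vidussi (building on Bauer and Li) that such a $Y$ has the rational homology of a $T^2$-bundle over $T^2$ once $b_1>0$. This gives $\chi(Y)=0$ and $\sigma(Y)=0$. A closed orientable 4-manifold with $\pi_1$ a $PD_4$ group $G$ and $\chi(Y)=\chi(G)=0$ is aspherical, by Hillman's criterion. Applying this shows $\tilde Y$ is aspherical, and therefore $Y$ is aspherical as well. Two aspherical manifolds with isomorphic fundamental groups are homotopy equivalent, since both are $K(G,1)$ spaces. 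This gives $Y\simeq X$.

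The third step then finishes the proof: Farrell--Jones topological rigidity for infrasolvmanifolds upgrades the homotopy equivalence $Y\simeq X$ to a homeomorphism.

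The main obstacle is the Euler characteristic computation, namely showing $\chi(Y)=0$. Using Hillman's criterion requires exactly $\chi(Y)=\chi(X)=0$. From the symplectic side, $K=0$ gives $2\chi+3\sigma=0$. So it suffices to show $b_2^+(Y)$ is determined by the group. The approach is to apply the Bauer/Li bound $b_1\le 4$ and the argument that $b_1(Y)=b_1(G)\ge 2$. Together with $b_2^+\le 3$ in the $b_1>0$ case, this pins down $\chi=0$. If that argument falls short, I would instead use finite covers: $\chi$ is multiplicative under covers, while $b_2^+$ stays bounded by $3$, and this would force $\chi(Y)=0$.
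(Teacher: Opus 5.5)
The paper does not prove this statement; it quotes Friedl--Vidussi. Your outline is essentially the argument behind that corollary, which is packaged in Hillman's characterization of closed $4$-manifolds homeomorphic to infrasolvmanifolds: get $\chi(Y)=0$ from the Li--Bauer result that SCYs with $b_1>0$ have the rational homology of a torus bundle (note $b_1(Y)=b_1(X)\ge 2$), deduce asphericity from Hillman's criterion for torsion-free virtually poly-$\Z$ groups of Hirsch length $4$, and upgrade the homotopy equivalence by topological rigidity. In dimension $4$ that rigidity step is Farrell--Jones together with Freedman surgery, which applies because these groups are good.

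One correction to your last paragraph. The bounds $2\le b_1\le 4$ and $b_2^+\le 3$, together with $2\chi+3\sigma=0$, do \emph{not} pin down $\chi$: they only give $\chi=6(1-b_1+b_2^+)$, so for instance $b_1=b_2^+=2$ would give $\chi=6$. What forces $\chi(Y)=0$ is your covering fallback. Each finite cover is again an SCY with $K=0$, so it satisfies both $b_1\le 4$ and $b_2^+\le 3$; this bounds $|\chi|$ uniformly, while $\chi(\tilde Y)=n\chi(Y)$ for covers of arbitrarily large degree $n$. That is precisely how the Li--Bauer result you cited earlier is proved, so your ``main obstacle'' was already settled by that citation.
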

Note that all $T^2$-bundles over $T^2$ are SCYs with $K=0$ and are also infrasolvmanifolds \cite{li}, \cite{fv}. Thus we get the following corollary, which will tell us the homeomorphism types of the SCYs we construct:
\begin{cor}
\label{cor:homeo}
Let $X$ be a SCY with $K_X=0$. If $\pi_1X$ is isomorphic to $\pi_1M(A,B;m,n)$, then $X$ is homeomorphic to $M(A,B;m,n)$.
\end{cor}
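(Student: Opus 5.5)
The corollary follows by applying the theorem of Friedl–Vidussi (\cite{fv}, Corollary 3.3) with the torus bundle in the role of the infrasolvmanifold. Set $M=M(A,B;m,n)$. We check that $M$ meets the hypotheses placed on ``$X$'' in that theorem, and that our given $X$ meets the hypotheses placed on ``$Y$''.

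First, $M$ is a symplectic Calabi–Yau surface with $K_M=0$. A $T^2$-bundle over $T^2$ carries a symplectic structure (Thurston's construction / \cite{li}) and has trivial canonical class. One way to see this is to note that the tangent bundle splits as vertical plus horizontal pieces. Both pieces are pullbacks of, or fiberwise, the tangent bundle of $T^2$. The fiberwise tangent bundle is flat, with monodromy in $\GL(2,\Z)$, so its first Chern class is torsion. In any case the facts that $M$ is a SCY with $K_M=0$ are recorded in \cite{li}, and also in \cite{fv} as cited above. Second, $M$ is an infrasolvmanifold; this is again noted in \cite{fv}, since torus bundles over tori are quotients of solvable Lie groups by discrete subgroups (nil, sol, or Euclidean geometries, up to finite covers). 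Third, the given $X$ is by hypothesis an SCY with $K_X=0$, and $\pi_1X\cong\pi_1M$.

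The cited theorem then applies with its $X$ equal to $M$ and its $Y$ equal to our $X$, giving a homeomorphism $X\cong M(A,B;m,n)$.

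The only step needing care is verifying that every $M(A,B;m,n)$, including the non-orientable-monodromy cases and those with nonzero $(m,n)$ and hence no section, is a SCY with $K=0$ and an infrasolvmanifold. I would not reprove this but cite \cite{li} and \cite{fv} for it, as the paper does just before the statement. Remark~\ref{rmk:scy} then ensures that the hypothesis $K_X=0$ is available for the SCYs constructed later.
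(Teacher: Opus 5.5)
Your argument is exactly the paper's: it too just notes that torus bundles over tori are SCYs with $K=0$ and infrasolvmanifolds (\cite{li}, \cite{fv}), and applies \cite{fv}, Corollary 3.3 with the bundle playing the role of the infrasolvmanifold. One correction to your side remarks, which applies equally to the paper's wording: if $\det A=-1$ or $\det B=-1$ the total space is non-orientable and so is not an SCY, so the argument really covers only $A,B\in\SL(2,\Z)$ (the only case used later); moreover Thurston's construction does not give symplecticity for every such bundle (the fiber of $M(I,I;1,0)$ is null-homologous), so that fact should rest on the citation rather than on your sketch.
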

\begin{rmk}
These arguments are not new. Baykur and Hamada-Hayano used this line of reasoning to identify homeomorphism types in \cite{baykur}, \cite{hh}.
\end{rmk}

\section{Semidirect products}
Given a matrix $A\in\GL(3,\Z)$, we get an action $\Z\curvearrowright\Z^3$, which in turn defines a semidirect product $\Z^3\rtimes_A\Z$.

\begin{prop}
\label{prop:torusbndl}
If $A=\begin{pmatrix} a & b\\c & d\end{pmatrix}\in\GL(2,\Z)$ and $M=M(A,I;m,n)$, then $\pi_1M\cong\Z^3\rtimes_{\tilde A}\Z$, where
\[
\tilde A=
\begin{pmatrix}
	a & b & -ma-nb\\
	c & d & -mc-nd\\
	0 & 0 & 1
\end{pmatrix}.
\]
\end{prop}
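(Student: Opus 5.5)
Starting from the presentation in Proposition \ref{prop:fundgrp} with $B=I$, i.e. $p=s=1$, $q=r=0$, the relations become
\[
\sigma\tau=\tau\sigma,\quad \alpha^{-1}\sigma\alpha=\sigma^a\tau^c,\quad \alpha^{-1}\tau\alpha=\sigma^b\tau^d,\quad \beta\ \text{commutes with}\ \sigma,\tau,\quad \alpha\beta\alpha^{-1}\beta^{-1}=\sigma^m\tau^n .
\]
The plan is to exhibit the normal subgroup $N=\langle \sigma,\tau,\beta\rangle$ as a copy of $\Z^3$, show that $\pi_1M=N\rtimes\langle\alpha\rangle$ with $\langle\alpha\rangle\cong\Z$, and then read off the conjugation action of $\alpha$ on $N$ in the basis $\sigma,\tau,\beta$ (writing $N$ additively, column vectors).

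\emph{The action.} We have $\alpha^{-1}\sigma\alpha=\sigma^a\tau^c$ and $\alpha^{-1}\tau\alpha=\sigma^b\tau^d$, which give the first two columns $(a,c,0)^T$, $(b,d,0)^T$. For $\beta$, the commutator relation gives $\beta\alpha^{-1}\beta^{-1}=\alpha^{-1}\sigma^m\tau^n$. Hence
\[
\alpha^{-1}\beta\alpha=\beta\,(\beta^{-1}\alpha^{-1}\beta\alpha)=\beta\,\alpha^{-1}(\sigma^m\tau^n)^{-1}\alpha .
\]
Since $\beta^{-1}\alpha^{-1}\beta\alpha$ is the inverse of $\alpha^{-1}\beta^{-1}\alpha\beta$, which is the $\alpha^{-1}$-conjugate of $\beta^{-1}\alpha\beta\alpha^{-1}$, this should come out to
\[
\alpha^{-1}\beta\alpha=\beta\,\alpha^{-1}\sigma^{-m}\tau^{-n}\alpha=\beta\,\sigma^{-ma-nb}\tau^{-mc-nd}.
\]
That is exactly the third column of $\tilde A$. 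I will have to track signs carefully here; conjugating by $\alpha$ versus $\alpha^{-1}$ may produce $A^{-1}$-type entries instead. This bookkeeping is the main obstacle, and I will verify it directly from $\alpha\beta=\sigma^m\tau^n\beta\alpha$. Once the formulas $\alpha^{-1}x\alpha=\tilde A x$ hold for $x\in\{\sigma,\tau,\beta\}$, we get a well-defined homomorphism
\[
\Phi:\pi_1M\to\Z^3\rtimes_{\tilde A}\Z,
\]
sending $\sigma,\tau,\beta$ to the standard basis vectors $e_1,e_2,e_3$ and $\alpha$ to the generator $t$. The semidirect product is taken with the convention $t^{-1}vt=\tilde Av$, or, if conventions require, with $t\mapsto\alpha^{-1}$. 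To see that $\Phi$ is well defined, check each relation of Proposition \ref{prop:fundgrp} in the target. Each one is immediate: $e_1,e_2,e_3$ commute, and the $\alpha$-conjugation relations are built in. For the commutator relation, note that $\alpha\beta\alpha^{-1}\beta^{-1}$ maps to $(\tilde A^{-1}-I)e_3$ or $(\tilde A-I)e_3$, and this must be matched against $me_1+ne_2$, which forces consistency of the chosen convention.

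\emph{Inverse map and injectivity.} Conversely, define $\Psi$ on the standard presentation
\[
\langle e_1,e_2,e_3,t \mid [e_i,e_j]=1,\ t^{-1}e_it=\tilde Ae_i\rangle
\]
by $e_1\mapsto\sigma$, $e_2\mapsto\tau$, $e_3\mapsto\beta$, $t\mapsto\alpha$. The relations hold in $\pi_1M$ by the computations above, together with $[\beta,\sigma]=[\beta,\tau]=1$. The maps $\Psi$ and $\Phi$ are then mutually inverse on generators, so $\Phi$ is an isomorphism.

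An alternative, equivalent route is to view $M$ as a $T^3$-bundle over $S^1$, fibred over the $\alpha$-circle, and use the presentation $\pi_1=\Z^3\rtimes\Z$ of a mapping torus. The algebraic argument above avoids needing that geometric description.
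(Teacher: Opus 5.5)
Your proposal is correct and follows essentially the paper's route: set $B=I$ so that $\sigma,\tau,\beta$ commute, rewrite the commutator relation as $\alpha^{-1}\beta\alpha=\sigma^{-ma-nb}\tau^{-mc-nd}\beta$, and read off $\Z^3\rtimes_{\tilde A}\Z$. Your sign worry is unfounded. With the convention $t^{-1}vt=\tilde Av$ one has $\tilde A^{-1}e_3=(m,n,1)^T$, so $\alpha\beta\alpha^{-1}\beta^{-1}$ maps to $(\tilde A^{-1}-I)e_3=me_1+ne_2$ as required; your explicit mutually inverse maps $\Phi,\Psi$ simply make precise the reversibility of the rewriting, which the paper leaves implicit.
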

\begin{proof}
Using the presentation of $\pi_1M$ given in Proposition \ref{prop:fundgrp}, we see that $\sigma,\tau$, and $\beta$ all commute. Observe that $\alpha\beta\alpha^{-1}\beta^{-1}=\sigma^m\tau^n$ implies
\begin{align*}
	\alpha\beta\alpha^{-1}&=\sigma^m\tau^n\beta\\
	\beta&=\alpha^{-1}\sigma^m\tau^n\beta\alpha\\
	&=(\sigma^a\tau^c)^m(\sigma^b\tau^d)^n\alpha^{-1}\beta\alpha,
\end{align*}
and hence $\alpha^{-1}\beta\alpha=\sigma^{-ma-nb}\tau^{-mc-nd}\beta$. Thus $\pi_1M$ is isomorphic to $\Z^3\rtimes\Z$, where the $\Z^3$ subgroup is generated by $\sigma,\tau,\beta$ and the $\Z$-action by $\alpha$ is specified by
\[
	\alpha^{-1}\sigma\alpha=\sigma^a\tau^c,\;\;\alpha^{-1}\tau\alpha=\sigma^b\tau^d,\;\;\alpha^{-1}\beta\alpha=\sigma^{-ma-nb}\tau^{-mc-nd}\beta.
\]
\end{proof}
\begin{rmk}
If we are given $\tilde A$ as
\[\tilde A = \begin{pmatrix} a & b & x\\c & d & y\\0 & 0 & 1\end{pmatrix},\]
then we have $\Z^3\rtimes_{\tilde A}\Z\cong\pi_1 M(A,I;m,n)$, with $m,n$ given by
\[ \begin{pmatrix}m\\n\end{pmatrix} = -A^{-1}\begin{pmatrix}x\\y\end{pmatrix}.\]
\end{rmk}

\begin{lem}
\label{lem:conj}
Let $A,P\in\GL(3,\Z)$ and set $B=PAP^{-1}$. Then
\[ \Z^3\rtimes_A\Z\cong\Z^3\rtimes_B\Z.\]
\end{lem}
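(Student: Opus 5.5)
We construct an explicit isomorphism that acts on the $\Z^3$ factor by $P$ and fixes the generator of $\Z$. Write $G_A=\Z^3\rtimes_A\Z$, with elements $(v,k)$, $v\in\Z^3$, $k\in\Z$, and multiplication
\[
(v,k)(w,l)=(v+A^k w,\;k+l).
\]
This is the convention in which the generator $t=(0,1)$ acts on $\Z^3$ by $A$. (With the right-action convention $\alpha^{-1}x\alpha=Ax$ used in Proposition \ref{prop:torusbndl}, the same argument goes through verbatim after swapping $A^k$ for the appropriate power; the convention does not affect the proof.) Define $G_B$ in the same way using $B=PAP^{-1}$.

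Define $\phi:G_A\to G_B$ by
\[
\phi(v,k)=(Pv,\;k).
\]
To check that $\phi$ is a homomorphism, compute both sides:
\[
\phi\bigl((v,k)(w,l)\bigr)=\bigl(Pv+PA^k w,\;k+l\bigr),
\]
\[
\phi(v,k)\,\phi(w,l)=\bigl(Pv+B^kPw,\;k+l\bigr).
\]
These agree because $B^kP=PA^kP^{-1}P=PA^k$ for every $k\in\Z$, including negative $k$, since $B^{-1}=PA^{-1}P^{-1}$.

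The map $\phi$ is a bijection. Because $P\in\GL(3,\Z)$, the matrix $P^{-1}$ also has integer entries, so $(v,k)\mapsto(P^{-1}v,k)$ is a well-defined inverse map $G_B\to G_A$. By the same computation with the roles of $A$ and $B$ exchanged (using $A=P^{-1}BP$), this inverse is also a homomorphism.

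Nothing here is a real obstacle. The only points needing care are fixing a consistent convention for how the $\Z$-generator acts, so that the conjugation identity is applied on the correct side, and using $P\in\GL(3,\Z)$ rather than merely $\GL(3,\Q)$, so that $\phi$ preserves the lattice $\Z^3$ and has an integral inverse. In terms of the generators used in the paper, $\phi$ sends $\alpha$ to $\alpha$ and sends the basis of $\Z^3$ to the basis given by the columns of $P$.
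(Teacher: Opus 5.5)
Your proposal is correct and is essentially the paper's proof. The paper uses the same map $\phi(v,n)=(Pv,n)$, verifies the homomorphism property via $B^nP=PA^n$, and notes that $(v,n)\mapsto(P^{-1}v,n)$ is the inverse; your extra check that this inverse is a homomorphism is harmless but automatic for a bijective homomorphism.
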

\begin{proof}
Define $\phi: \Z^3\rtimes_A\Z\to\Z^3\rtimes_B\Z$ by $\phi(v,n)=(Pv,n)$. In $\Z^3\rtimes_B\Z$, we have
\begin{equation*}
\begin{split}
	\phi(v,n)\phi(w,m)&=(Pv,n)(Pw,m)\\
	&=(Pv+B^nPw,n+m)\\
	&=(Pv+PA^nw,n+m)\\
	&=\phi(v+A^nw,n+m)\\
	&=\phi((v,n)(w,m)),
\end{split}
\end{equation*}
so $\phi$ is a homomorphism. $\phi^{-1}(v,n)=(P^{-1}v,n)$ is the inverse, so $\phi$ is an isomorphism.
\end{proof}

\section{Baykur's genus-3 pencils on SCYs}
In \cite{baykur}, Baykur constructed genus-3 pencils on SCYs by breeding two copies of Matsumoto's genus-2 relation. The genus-3 pencils depend on a mapping class $\phi$ in $\Mod(\Sigma_3^4)$ that fixes some set of curves on $\Sigma_3^4$ - this mapping class is used to conjugate one of the Matsumoto relations (the unprimed curves in Baykur's notation). When $\phi$ is the identity, the vanishing cycles are written in terms of the standard generators of $\pi_1(\Sigma_3)$ in \cite{baykur}, equations (48)-(59). A straightforward computation shows that the total space $X$ of the Lefschetz pencil has $\pi_1(X)=\Z^4$. Corollary \ref{cor:homeo} then implies that $X$ is homeomorphic to the 4-torus $T^4$ (the trivial $T^2$-bundle over $T^2$ with monodromy $(A,B)=(I,I)$ and obstruction class $(0,0)$).

By taking $\phi=t_{b_1}^lt_{a_3}^k$ (see \cite{baykur}, Figure 4 for these curves), Baykur obtains a family $X_{l,k}$ of SCYs that realizes all possible integral homology types of SCYs with $b_1>0$. As a warm-up for later computations, we will show that the $X_{l,k}$ are all homeomorphic to $T^2$-bundles over $T^2$. More precisely,
\begin{thm}
$X_{l,k}$ is homeomorphic to $M(T,I;0,0)$, where
\[
	T = \begin{pmatrix}lk+1 & l\\k & 1\end{pmatrix}.
\]
These torus bundles all have sections.
\end{thm}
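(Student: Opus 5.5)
The plan is to compute $\pi_1(X_{l,k})$ directly from the pencil, identify it with a semidirect product $\Z^3\rtimes_{\tilde A}\Z$, and then invoke Corollary \ref{cor:homeo}.

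\textbf{Presentation.} Since the pencil has base points, $X_{l,k}$ is obtained from the blowup, and the blowup does not change $\pi_1$. Hence $\pi_1(X_{l,k})$ is the quotient of $\pi_1(\Sigma_3)=\langle a_1,b_1,a_2,b_2,a_3,b_3\mid \prod[a_i,b_i]\rangle$ by the normal closure of the vanishing cycles. The vanishing cycles are Baykur's curves (48)--(59), except that the unprimed ones are first transformed by $\phi=t_{b_1}^l t_{a_3}^k$. I will apply $\phi$ to each unprimed word using the standard action of a Dehn twist on $\pi_1$: $t_{b_1}$ changes only generators meeting $b_1$, essentially $a_1\mapsto a_1b_1^{\pm1}$, and $t_{a_3}$ changes only $b_3\mapsto b_3a_3^{\pm1}$. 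This gives an explicit presentation with parameters $l,k$.

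\textbf{Simplification.} For $\phi=\mathrm{id}$ the relations collapse $\pi_1$ to $\Z^4$. I expect the primed relations, which are unchanged, to still force most commutations, for instance identifying the generators of the second and third handles with those of the first up to the relations. The twisted relations should then replace some commutation relations by conjugation relations. Concretely, I would aim to show that the group is generated by four elements $x_1,x_2,x_3,t$ such that:
\begin{itemize}
\item $x_1,x_2,x_3$ pairwise commute;
\item $t$ acts on the pair $x_1,x_2$ by the matrix $T$, that is, $t^{-1}x_1t=x_1^{lk+1}x_2^{k}$ and $t^{-1}x_2t=x_1^{l}x_2$, possibly after relabeling;
\item $t$ commutes with $x_3$.
\end{itemize}
The shape of $T=\begin{pmatrix}1&l\\0&1\end{pmatrix}\begin{pmatrix}1&0\\k&1\end{pmatrix}$ suggests that the $t_{b_1}^l$ and $t_{a_3}^k$ factors each contribute one elementary matrix. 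Extracting this normal form, and checking that no further relations survive (for example, by tracking that abelianization has the rank expected from Baykur's homology computation), is the main computational obstacle. If the action matrix comes out only conjugate to $\tilde T=\begin{pmatrix}T&0\\0&1\end{pmatrix}$, for example with a nonzero third column, I will use Lemma \ref{lem:conj} to conjugate it back to this form.

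\textbf{Identification and conclusion.} By Proposition \ref{prop:torusbndl} with $m=n=0$, $\Z^3\rtimes_{\tilde T}\Z\cong\pi_1 M(T,I;0,0)$. Since $X_{l,k}$ is an SCY with $K=0$ by Remark \ref{rmk:scy}, Corollary \ref{cor:homeo} shows that $X_{l,k}$ is homeomorphic to $M(T,I;0,0)$. Finally, $M(T,I;0,0)$ has a section by construction; this also follows from Proposition \ref{prop:section}, since $\binom00$ lies in $\spn{T-I,0}$.
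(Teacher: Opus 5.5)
Your outline follows the paper's route: quotient $\pi_1(\Sigma_3)$ by Baykur's primed curves and the $\phi$-twisted unprimed curves, reduce to $\Z^3\rtimes\Z$, conjugate via Lemma \ref{lem:conj}, then apply Proposition \ref{prop:torusbndl}, Corollary \ref{cor:homeo} and Proposition \ref{prop:section}. Your predicted normal form is also correct: in the paper's variables it is $t=a_2$, $x_1=b_3b_2^{-1}$, $x_2=a_3^{-1}$, $x_3=b_2$.

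The gap is that the step carrying all of the content is not done: reducing the twelve vanishing-cycle relations and the surface relation to that presentation. The check you propose for it would also not suffice as a proof. Matching the abelianization, or its rank, against Baykur's homology computation cannot show that ``no further relations survive'', since non-isomorphic groups can share an abelianization. What is required is a Tietze equivalence. You must derive the semidirect-product relations from the vanishing cycles \emph{and} show that every vanishing-cycle relation follows from them.

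In the paper this works as follows. The primed relations alone give $[a_3,b_2]=1$ (from $(B_0')$ and $(A_0')$), then $[a_1,b_1]=1$ (comparing $(A_1')$ with $(A_2')$). They also give $a_1=a_2a_3^{-1}a_2^{-1}$, $b_1=a_2b_3^{-1}b_2a_2^{-1}$, and $[a_3,b_3]=[b_2,b_3]=[a_2,b_2]=1$, so $a_1,b_1$ can be eliminated. With these, every twisted relation except $(B_0)$ and $(B_2)$ is redundant. Those two become $a_2^{-1}a_3a_2=b_3^{-l}a_3b_2^{l}$ and $a_2^{-1}b_3a_2=b_3^{lk+1}a_3^{-k}b_2^{-lk}$.

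The resulting matrix on $\{b_3,a_3,b_2\}$ has its off-block entries in the third \emph{row}, $(-lk,l,1)$, not the third column. So for $l\neq0$ it is not of the shape in Proposition \ref{prop:torusbndl} for any $(m,n)$. The explicit conjugation by $P$, whose third row $(1,0,1)$ is an $A$-invariant functional, brings it to $\mathrm{diag}(T,1)$ and shows $m=n=0$.

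Finally, when you invoke Remark \ref{rmk:scy}, state where its hypotheses come from. The condition $c_1^2=0$ comes from Baykur's computation $e=\sigma=0$, and $b_1\ge 2$ comes from the computed $\pi_1$.
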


\begin{proof}
 $\pi_1(X_{l,k})$ is obtained from $\pi_1(\Sigma_3)$ by dividing out all the vanishing cycles. We use the standard generators of $\pi_1(\Sigma_3)$, as in \cite{baykur}, Figure 4. We start with the primed curves coming from the unconjugated embedding of Matsumoto's relation, given in \cite{baykur}, equations (54)-(59):
\[ a_2^{-1}a_1a_2a_3=1 \tag{$B_0'$} \]
\[ a_1b_1^{-1}a_2a_3^2b_3^{-1}a_3^{-1}b_2a_2^{-1}=1 \tag{$B_1'$} \]
\[ b_1a_2b_2^{-1}a_3b_3a_3^{-1}a_2^{-1} = 1 \tag{$B_2'$} \]
\[ a_2^{-1}a_1a_2b_2^{-1}a_3b_2 = 1 \tag{$A_0'$} \]
\[a_1b_1^{-1}a_2b_2^{-1}a_3^2b_3^{-1}a_3^{-1}b_2^2a_2^{-1}=1 \tag{$A_1'$} \]
\[ b_1^{-1}a_2b_2^{-1}a_3b_3^{-1}a_3^{-1}b_2^2a_2^{-1}=1 \tag{$A_2'$} \]
\[ [a_1,b_1][a_2,b_2][a_3,b_3]=1 \tag{$\Sigma$} \]
(See \cite{baykur}, Figure 7 for these curves). Using ($B_0'$) in ($A_0'$), we see that $a_3^{-1}b_2^{-1}a_3b_2=1$, so $[a_3,b_2]=1$. Using this, we commute some $a_3$'s and $b_2$'s in ($A_1'$) and ($A_2'$):
\[ a_1b_1^{-1}a_2a_3^2b_2^{-1}b_3^{-1}a_3^{-1}b_2^2a_2^{-1}=1\]
\[ b_1^{-1}a_2a_3b_2^{-1}b_3^{-1}a_3^{-1}b_2^2a_2^{-1}=1\]
Cancelling off the $a_3b_2^{-1}b_3^{-1}a_3^{-1}b_2^2a_2^{-1}$ on the right side of both gives $a_1b_1^{-1}a_2a_3=b_1^{-1}a_2$. Thus $a_1b_1^{-1}=b_1^{-1}a_2a_3^{-1}a_2^{-1}$, and using ($B_0'$) we get $a_1b_1^{-1}=b_1^{-1}a_1$, so $[a_1,b_1]=1$.

We modify ($B_2'$) by first commuting $a_3$ and $b_2$, then using ($B_0'$), then commuting $a_1$ and $b_1$:
\begin{equation*}
\begin{split}
	1&=b_1a_2a_3b_2^{-1}b_3a_3^{-1}a_2^{-1}\\
	&=b_1a_1^{-1}a_2b_2^{-1}b_3a_2^{-1}a_1\\
	&=b_1a_2b_2^{-1}b_3a_2^{-1}.
\end{split}
\end{equation*}

So far, our relations simplify to
\[ a_1=a_2a_3^{-1}a_2^{-1} \tag{$B_0'$} \]
\[ a_1b_1^{-1}a_2a_3^2b_3^{-1}a_3^{-1}b_2a_2^{-1}=1 \tag{$B_1'$} \]
\[ b_1=a_2b_3^{-1}b_2a_2^{-1} \tag{$B_2'$} \]
\[ [a_3,b_2] = 1 \tag{$A_0'$} \]
\[ [a_1,b_1]=1 \tag{$A_1'$} \]
\[ b_1^{-1}a_2b_2^{-1}a_3b_3^{-1}a_3^{-1}b_2^2a_2^{-1}=1 \tag{$A_2'$} \]
\[ [a_2,b_2][a_3,b_3]=1 \tag{$\Sigma$} \]

Using ($B_0'$) and ($B_2'$) in ($A_1'$), we see that
\[ 1=[a_2a_3^{-1}a_2^{-1},a_2b_3^{-1}b_2a_2^{-1}]=a_2[a_3^{-1},b_3^{-1}b_2]a_2^{-1},\]
so $a_3$ commutes with $b_3^{-1}b_2$. Since $a_3$ already commutes with $b_2$ by ($A_0'$), it follows that $a_3$ commutes with $b_3$. Thus ($A_1'$) is equivalent to $[a_3,b_3]=1$ and ($\Sigma$) becomes $[a_2,b_2]=1$. We can modify ($B_1'$) using our commutators and ($B_0'$):
\begin{equation*}
\begin{split}
	1&=a_1b_1^{-1}a_2a_3b_3^{-1}b_2a_2^{-1}\\
	&=a_1b_1^{-1}a_1^{-1}a_2b_3^{-1}b_2a_2^{-1}\\
	&=b_1^{-1}a_2b_3^{-1}b_2a_2^{-1}.
\end{split}
\end{equation*}
This is the same as ($B_2'$), so we can eliminate ($B_1'$).
Lastly, we modify ($A_2'$) using our commutators and ($B_2'$):
\begin{equation*}
\begin{split}
	1&=a_2^{-1}b_1^{-1}a_2b_2^{-1}b_3^{-1}b_2^2\\
	&=b_2^{-1}b_3b_2^{-1}b_3^{-1}b_2^2,
\end{split}
\end{equation*}
so $b_2b_3b_2^{-1}b_3^{-1}=1$. That is, $[b_2,b_3]=1$. Our relations have been simplified to:
\[ a_1=a_2a_3^{-1}a_2^{-1} \tag{$B_0'$} \]
\[ b_1=a_2b_3^{-1}b_2a_2^{-1} \tag{$B_2'$} \]
\[ [a_3,b_2] = 1 \tag{$A_0'$} \]
\[ [a_3,b_3]=1 \tag{$A_1'$} \]
\[ [b_2,b_3]=1 \tag{$A_2'$} \]
\[ [a_2,b_2]=1 \tag{$\Sigma$} \]

We now determine the relations coming from the unprimed curves (the conjugated embedding of Matsumoto's relation). Again, see \cite{baykur}, Figure 7 for these curves. The induced action of $t_{b_1}$ on $\pi_1(\Sigma_3)$ sends $a_1$ to $a_1b_1^{-1}$ and fixes all other generators, and the induced action of $t_{a_3}$ on $\pi_1(\Sigma_3)$ sends $b_3$ to $b_3a_3$ and fixes all other generators. It follows that the induced action of $\phi=t_{b_1}^lt_{a_3}^k$ on $\pi_1(\Sigma_3)$ is given by
\[ \phi(a_1)=a_1b_1^{-l},\;\;\phi(b_3)=b_3a_3^k,\]
and all other generators are fixed. Combining this with (48)-(53) in \cite{baykur}, we get the additional relations in $\pi_1(X_{l,k})$:
\[ a_1b_1^{-l}a_3=1 \tag{$B_0$}\]
\[ a_1b_1^{-l-1}a_2b_2a_2^{-1}a_3^{-k+1}b_3^{-1}=1 \tag{$B_1$}\]
\[ b_1^{-1}a_2b_2a_2^{-1}a_3^{-k}b_3^{-1}=1 \tag{$B_2$}\]
\[ b_2^{-1}[a_3,b_3]a_1b_1^{-l}[b_3,a_3]b_2a_3=1 \tag{$A_0$}\]
\[ b_2^{-1}[a_3,b_3]a_1b_1^{-l-1}a_2b_2a_2^{-1}[b_3,a_3]b_2a_3^{-k+1}b_3^{-1}=1 \tag{$A_1$}\]
\[ b_2^{-1}[a_3,b_3]a_1b_1^{-1}a_1^{-1}([b_3,a_3]b_2)^2a_3^{-k}b_3^{-1}=1 \tag{$A_2$}\]
Using the commutators in ($A_0'$), ($A_1'$), ($A_2'$), and ($\Sigma$) gives:
\[ a_1b_1^{-l}a_3=1 \tag{$B_0$}\]
\[ a_1b_1^{-l-1}b_2a_3^{-k+1}b_3^{-1}=1 \tag{$B_1$}\]
\[ b_1^{-1}b_2a_3^{-k}b_3^{-1}=1 \tag{$B_2$}\]
\[ b_2^{-1}a_1b_1^{-l}b_2a_3=1 \tag{$A_0$}\]
\[ b_2^{-1}a_1b_1^{-l-1}b_2^2a_3^{-k+1}b_3^{-1}=1 \tag{$A_1$}\]
\[ b_2^{-1}b_1^{-1}b_2^2a_3^{-k}b_3^{-1}=1 \tag{$A_2$}\]
In ($A_1$) we can commute one power of $b_2$ to the very right using ($A_0'$) and ($\Sigma$), and this gives ($B_1$), so we can get rid of ($A_1$). Likewise, in ($A_2$) we can commute one power of $b_2$ to the very right and this gives ($B_2$), so we can get rid of ($A_2$). In ($A_0$) we can commute $b_2$ and $a_3$ and recognize this as ($B_0$), so ($A_0$) can be eliminated. Finally, note that ($B_0$), ($B_2$), and ($A_1'$) imply ($B_1$).

The only remaining relations are ($B_0$) and ($B_2$).Using ($B_0'$) and ($B_2'$) to eliminate the generators $a_1,b_1$ in ($B_0$) and ($B_2$) shows that $\pi_1(X_{l,k})$ is generated by $a_2,b_2,a_3,b_3$ with the relations
\[ a_2a_3^{-1}b_2^{-l}b_3^la_2^{-1}a_3=1 \tag{$B_0$}\]
\[ a_2b_2^{-1}b_3a_2^{-1}b_2a_3^{-k}b_3^{-1}=1 \tag{$B_2$}\]
\[ [a_3,b_2] = 1 \tag{$A_0'$} \]
\[ [a_3,b_3]=1 \tag{$A_1'$} \]
\[ [b_2,b_3]=1 \tag{$A_2'$} \]
\[ [a_2,b_2]=1 \tag{$\Sigma$} \]
($B_0$) can be rearranged to $a_2^{-1}a_3a_2=a_3b_2^lb_3^{-l}$. Cancelling the $b_2$'s in ($B_2$) and rearranging gives
\begin{equation*}
\begin{split}
	a_2^{-1}a_3^{-k}b_3^{-1}a_2&=b_3^{-1}\\
	(a_2^{-1}a_3a_2)^{-k}a_2^{-1}b_3^{-1}a_2&=b_3^{-1}\\
	a_3^{-k}b_2^{-lk}b_3^{lk}a_2^{-1}b_3^{-1}a_2&=b_3^{-1}\\
	a_2^{-1}b_3^{-1}a_2&=a_3^kb_2^{lk}b_3^{-lk-1}\\
	a_2^{-1}b_3a_2&=a_3^{-k}b_2^{-lk}b_3^{lk+1}
\end{split}
\end{equation*}
We are done simplifying the relations. The final presentation for $\pi_1(X_{l,k})$ is
\begin{multline}
	\pi_1(X_{l,k})=\langle a_2,b_2,a_3,b_3|a_2^{-1}a_3a_2=b_3^{-l}a_3b_2^l,a_2^{-1}b_3a_2=b_3^{lk+1}a_3^{-k}b_2^{-lk},\\
	[a_3,b_2]=1,[a_3,b_3]=1,[b_2,b_3]=1,[a_2,b_2]=1\rangle.
\end{multline}
This is precisely the presentation for a semidirect product $\Z^3\rtimes_A\Z$, with $\Z^3$ generated by the ordered basis $\{b_3,a_3,b_2\}$ and $\Z$ generated by $a_2$, and the action of $a_2$ on $\Z^3$ given by
\[ A=\begin{pmatrix} lk+1 & -l & 0\\-k & 1 & 0\\ -lk & l & 1\end{pmatrix}.\]
Set
\[ P=\begin{pmatrix} 1 & 0 & 0\\0 & -1 & 0\\1 & 0 & 1\end{pmatrix}\in\GL(3,\Z).\]
We get that
\[ B=PAP^{-1}=\begin{pmatrix} lk+1 & l & 0\\k & 1 & 0\\ 0 & 0 & 1\end{pmatrix}.\]
By Lemma \ref{lem:conj}, $\Z^3\rtimes_A \Z\cong\Z^3\rtimes_B \Z$, and by Proposition \ref{prop:torusbndl}, $\Z^3\rtimes_B\Z\cong\pi_1(M(T,I;0,0))$ with
\[ T = \begin{pmatrix} lk+1 & l\\k & 1\end{pmatrix}.\]
Baykur computed that $e(X_{l,k})=\sigma(X_{l,k})=0$, so $c_1^2(X_{l,k})=0$. Moreover, since we identified $\pi_1(X_{l,k})$ with that of a torus bundle, $b_1(X_{l,k})\ge 2$. It follows from Corollary \ref{cor:homeo} and Remark \ref{rmk:scy} that $X_{l,k}$ is homeomorphic to $M(T,I;0,0)$. These torus bundles all have sections by Proposition \ref{prop:section}.
\end{proof}

\section{More genus-3 pencils}
In this section, we will use the same breeding technique of \cite{baykur} to produce genus-3 pencils on a family of SCYs, and by identifying the resulting fundamental groups as those arising from torus bundles, we know the homeomorphism type of our SCYs. Moreover, many of the torus bundles in this family do not have sections.

Recall equation (5-2) in \cite{hh}, which is a modification of Matsumoto's $(6,2)$ relation in $\Sigma_2^4$. We embed $\Sigma_2^4$ in $\Sigma_3^4$ as in \cite{hh}, Figure 14 (d)-(f), giving the relation
\begin{equation}
\label{eq:hh}
	t_{d_7}t_{d_8}t_{d_9}t_{d_{10}}t_{d_{11}}t_{d_{12}}=t_{\delta_3}t_{\delta_4}t_{s_1}t_{s_2}t_{s_3}^{-1}t_{s_4}^{-1}.
\end{equation}
Also recall equation (44) in \cite{baykur}, which is another modification of Matsumoto's $(6,2)$ relation in $\Sigma_2^4$. We embed $\Sigma_2^4$ in $\Sigma_3^4$ as in \cite{baykur}, left half of Figure 7, giving the relation (45) in \cite{baykur}:
\begin{equation}
\label{eq:b}
	t_{B_0}t_{B_1}t_{B_2}t_{A_0}t_{A_1}t_{A_2}=t_{\partial_1}t_{\partial_2}t_{C_1'}t_{C_2'}t_{C_1}^{-1}t_{C_2}^{-1}.
\end{equation}
Combining the notation used in \cite{hh} and \cite{baykur}, we see that $\partial_1=\delta_1$, $\partial_2=\delta_2$, $C_1=s_2$, $C_2=s_1$, $C_1'=s_4$, and $C_2'=s_3$. Given any mapping class $\phi\in\Mod(\Sigma_3^4)$ that fixes $s_1,s_2,s_3$, and $s_4$, we can breed the relations (\ref{eq:hh}) and (\ref{eq:b}) to get the positive factorization
\begin{equation}
	t_{d_7}t_{d_8}t_{d_9}t_{d_{10}}t_{d_{11}}t_{d_{12}}t_{\phi(B_0)}t_{\phi(B_1)}t_{\phi(B_2)}t_{\phi(A_0)}t_{
	\phi(A_1)}t_{\phi(A_2)}=t_{\delta_1}t_{\delta_2}t_{\delta_3}t_{\delta_4}.
\end{equation}
Let us call the total space of the Lefschetz pencil associated to this factorization $X_\phi$.

We first determine the relations in $\pi_1$ coming from the vanishing cycles $d_7,\dots,d_{12}$. Writing them in terms of the standard generators of $\pi_1(\Sigma_3)$ (\cite{baykur}, Figure 4), we get the following relations:
\[ [b_3,a_3]a_2^{-1}a_1^{-1}a_2a_3=1 \tag{$d_7$}\]
\[ a_3b_3^{-1}a_3^{-1}a_2^{-1}b_1a_1^{-1}a_2a_3=1 \tag{$d_8$}\]
\[ b_3^{-1}a_3^{-1}a_2^{-1}b_1a_2a_3=1 \tag{$d_9$}\]
\[ a_3b_3^{-1}a_3^{-1}b_2a_2^{-1}a_1^{-1}a_2b_2^{-1}a_3b_3=1 \tag{$d_{10}$}\]
\[ a_3b_3^{-1}a_3^{-1}b_2a_2^{-1}b_1a_1^{-1}a_2b_2^{-1}a_3=1 \tag{$d_{11}$}\]
\[ b_3^{-1}a_3^{-1}b_2a_2^{-1}b_1a_2b_2^{-1}a_3=1 \tag{$d_{12}$}\]
In ($d_9$), moving the $a_3$ to the front and comparing this to ($d_8$), we see that
\[ a_1^{-1}a_2a_3=a_2. \tag{$d_8'$}\]
Let's replace ($d_8$) with ($d_8'$), keeping all other relations the same. Using ($d_8'$), we see that ($d_7$) becomes $[b_3,a_3]=1$. Now using $[b_3,a_3]=1$, we can cancel the $a_3$'s in ($d_9$) to get $b_3^{-1}a_2^{-1}b_1a_2=1$. So far, relations ($d_{10}$)-($d_{12}$) remain unchanged and relations ($d_7$)-($d_9$) now read
\[ [a_3,b_3]=1 \tag{$d_7$}\]
\[ a_1^{-1}a_2a_3=a_2 \tag{$d_8'$}\]
\[ b_3^{-1}a_2^{-1}b_1a_2=1 \tag{$d_9$}\]
Using $[a_3,b_3]=1$ in ($d_{10}$), we get $b_2a_2^{-1}a_1^{-1}a_2b_2^{-1}a_3=1$. Now use ($d_8'$) to see that $b_2a_3^{-1}b_2^{-1}a_3=1$. Thus ($d_{10}$) has become $[b_2,a_3]=1$. Now we work on ($d_{11}$): use $[a_3,b_3]=1$ and $[b_2,a_3]=1$ to cancel and commute the $a_3$'s: $b_3^{-1}b_2a_2^{-1}b_1a_1^{-1}a_2a_3b_2^{-1}=1$. Use ($d_8'$) to rewrite this as $b_3^{-1}b_2a_2^{-1}b_1a_2b_2^{-1}=1$. Finally, use ($d_9$) to write this as $b_3^{-1}b_2b_3b_2^{-1}=1$. At this point, we have simplified ($d_{10}$) and ($d_{11}$) to
\[ [b_2,a_3]=1 \tag{$d_{10}$}\]
\[ [b_2,b_3]=1 \tag{$d_{11}$}\]
It turns out ($d_{12}$) is redundant: using $[a_3,b_3]=1$ to cancel the $a_3$'s gives $b_3^{-1}b_2a_2^{-1}b_1a_2b_2^{-1}=1$. Now using $[b_2,b_3]=1$ to cancel the $b_2$'s gives ($d_9$).

Rewrite ($d_8'$) and ($d_9$) as $a_1=a_2a_3a_2^{-1}$, $b_1=a_2b_3a_2^{-1}$ to see that $[a_1,b_1]=1$. The surface relation in $\pi_1(\Sigma_3)$ then collapses to $[a_2,b_2]=1$. In summary, we have the following relations coming from the vanishing cycles $d_7,\dots,d_{12}$ and the surface relation:
\begin{equation}
\label{eq:hh-rels}
	a_1=a_2a_3a_2^{-1},\; b_1=a_2b_3a_2^{-1},\; [a_3,b_3]=[b_2,a_3]=[b_2,b_3]=[a_2,b_2]=1.
\end{equation}

With some simplifications from (\ref{eq:hh-rels}), the vanishing cycles $B_0,\dots,A_2$ contribute these relations (see also \cite{baykur}, equations (48)-(53)):
\[ a_1a_3=1 \tag{$B_0$}\]
\[ a_1b_1^{-1}b_2a_3b_3^{-1}=1 \tag{$B_1$} \]
\[ b_1^{-1}b_2b_3^{-1}=1 \tag{$B_2$} \]
\[ a_1a_3=1 \tag{$A_0$}\]
\[ a_1b_1^{-1}b_2a_3b_3^{-1}=1 \tag{$A_1$}\]
\[ b_1^{-1}b_2b_3^{-1}=1 \tag{$A_2$}\]
Note that the relations coming from $A_0,A_1,A_2$ are redundant, and that relation ($B_1$) follows from ($B_0$), ($B_2$), and the fact that $[a_3,b_3]=1$. 

Fix some $C=\begin{pmatrix} p & q\\r & s\end{pmatrix}\in\SL(2,\Z)$. Since $a_1$ and $b_1$ commute, we can let $\phi\in\Mod(\Sigma_3^4)$ be an appropriate product of $t_{a_1}$'s and $t_{b_1}$'s so that $\phi(a_1)=a_1^pb_1^r$, $\phi(b_1)=a_1^qb_1^s$, and $\phi$ fixes the other standard generators of $\pi_1(\Sigma_3)$. Hence the only two new relations coming from $\phi(B_0),\dots,\phi(A_2)$ are:
\[ a_1^pb_1^ra_3=1 \tag{$\phi(B_0)$}\]
\[ (a_1^qb_1^s)^{-1}b_2b_3^{-1}=1 \tag{$\phi(B_2)$}\]
In summary, $\pi_1(X_\phi)$ is the group generated by $a_1,b_1,a_2,b_2,a_3,b_3$ with relations (\ref{eq:hh-rels}), ($\phi(B_0)$), and ($\phi(B_2)$). Using the first two relations in (\ref{eq:hh-rels}) to eliminate $a_3$ and $b_3$, we get the following presentation of $\pi_1(X_\phi)$:
\begin{multline}
	\pi_1(X_\phi)\cong\langle a_1,b_1,a_2,b_2|a_2^{-1}a_1a_2=a_1^{-p}b_1^{-r},a_2^{-1}b_1a_2=a_1^{-q}b_1^{-s}b_2,\\
	[a_1,b_1]=[a_1,b_2]=[b_1,b_2]=[a_2,b_2]=1\rangle.
\end{multline}

It is clear from this presentation that $\pi_1(X_\phi)$ is isomorphic to $\Z^3\rtimes_A\Z$, with $\Z^3$ generated by $\{a_1,b_1,b_2\}$, $\Z$ generated by $a_2$, and the action of $a_2$ on $\Z^3$ given by
\[
	A = \begin{pmatrix}
	-p & -q & 0\\
	-r & -s & 0\\
	0 & 1 & 1
	\end{pmatrix}.
\]
By computations identical to those in \cite{baykur}, Section 5.1, we find that $e(X_\phi)=\sigma(X_\phi)=0$, so $c_1^2(X_\phi)=0$. By our identifications of $\pi_1(X_\phi)$ with those of torus bundles below, we have that $b_1(X_\phi)\ge2$. Hence Remark \ref{rmk:scy} applies and we can use Corollary \ref{cor:homeo} to determine the homeomorphism type of $X_\phi$.

It is straightforward, although a bit tedious, to do a change of basis that makes $A$ look standard as in Proposition \ref{prop:torusbndl}.

Suppose first that $p=-1$ and $r=0$. Since $C\in\SL(2,\Z)$, we see that $s=-1$ and $C=\begin{pmatrix} -1 & q\\0 & -1\end{pmatrix}$. Thus
\[ A = \begin{pmatrix} 1 & -q & 0\\ 0 & 1 & 0\\ 0 & 1 & 1\end{pmatrix}.\]
Set
\[ P = \begin{pmatrix} 0 & 1 & 0\\ 0 & 0 & 1\\ 1 & 0 & q\end{pmatrix}\in\GL(3,\Z).\]
Then
\[ PAP^{-1}=\begin{pmatrix} 1 & 0 & 0\\1 & 1 & 0\\0 & 0 & 1\end{pmatrix},\]
so by Proposition \ref{prop:torusbndl} and Lemma \ref{lem:conj}, $X_\phi$ is homeomorphic to $M(T,I;0,0)$ for $T=\begin{pmatrix} 1 & 0\\1 & 1\end{pmatrix}$. Note that this torus bundle is independent of $q$ and has a section.

Now suppose that $(p+1,r)\ne(0,0)$. Let $d=\gcd(r,p+1)$, and let $x,y\in\Z$ be such that $x\frac{p+1}{d}+y\frac{r}{d}=1$. Set
\[ P=\begin{pmatrix} x & y & 0\\
0 & 0 & 1\\ -\frac{r}{d} & \frac{p+1}{d} & \frac{p+s+2}{d}\end{pmatrix}\in\GL(3,\Z).\]
Then
\[
	P^{-1}=\begin{pmatrix} \frac{p+1}{d} & y\frac{p+s+2}{d} & -y\\ \frac{r}{d} & -x\frac{p+s+2}{d} & x\\ 0 & 1 & 0\end{pmatrix},
\]
and hence
\begin{multline*}
	PAP^{-1}=\\
	\begin{pmatrix}
	\frac{p+1}{d}(x-d)-\frac{r}{d}(qx+sy) & \frac{p+s+2}{d}[y(x-d)+x(qx+sy)] & -y(x-d)-x(qx+sy)\\
	\frac{r}{d} & 1-x\frac{p+s+2}{d} & x\\
	0 & 0 & 1
	\end{pmatrix}.
\end{multline*}
This shows, using Proposition \ref{prop:torusbndl} and Lemma \ref{lem:conj}, that $X_\phi$ is homeomorphic to $M(T,I;m,n)$, where
\[ 
	T=\begin{pmatrix}
	\frac{p+1}{d}(x-d)-\frac{r}{d}(qx+sy) & \frac{p+s+2}{d}[y(x-d)+x(qx+sy)]\\
	\frac{r}{d} & 1-x\frac{p+s+2}{d}
	\end{pmatrix}
\]
and $(m,n)=(y(x-d)+x(qx+sy),d-x)$.

Observe that Proposition \ref{prop:section} implies that $M(T,I;m,n)$ has a section if and only if $\binom{m}{n}$ is in the $\Z$-span of the columns of $T-I$. We have that
\begin{equation*}
\begin{split}
	(p+1)(x-d)-r(qx+sy)-d&=x(p+1-rq)-rsy-d(p+1)-d\\
	&=x(p+1-rq)-rsy-d(p+2)\\
	&=x(p+1+1-ps)-rsy-d(p+2)\\
	&=x(p+2)-xps-rsy-d(p+2)\\
	&=(x-d)(p+2)-s(xp+ry)\\
	&=(x-d)(p+2)-s(d-x)\\
	&=(x-d)(p+s+2),
\end{split}
\end{equation*}
which shows that the columns of $T-I$ are
\[ v=\begin{pmatrix}\frac{p+s+2}{d}(x-d)\\ \frac{r}{d}\end{pmatrix},w=\frac{p+s+2}{d}\begin{pmatrix}y(x-d)+x(qx+sy)\\ -x\end{pmatrix}.\]
Write $v_1=\frac{p+s+2}{d}(x-d)$ and $w_1=y(x-d)+x(qx+sy)$ for convenience, so
\[ v=\begin{pmatrix}v_1\\ \frac{r}{d}\end{pmatrix}, w=\frac{p+s+2}{d}\begin{pmatrix}w_1\\-x\end{pmatrix},\begin{pmatrix}m\\n\end{pmatrix}=\begin{pmatrix}w_1\\d-x\end{pmatrix}.\]

Note that
\begin{equation}
\label{eq:alg}
\begin{split}
	rw_1&=ry(x-d)+rx(qx+sy)\\
	&=(x-d)(d-x(p+1))+rqx^2+xs(d-x(p+1))\\
	&=-d^2+d(x(p+1)+x)-x^2(p+1)+rqx^2+dsx-x^2s(p+1)\\
	&=-d^2+dx(p+2+s)-x^2(p+1-rq+sp+s)\\
	&=-d^2+dx(p+s+2)-x^2(p+s+2)\\
	&=-d^2-x(x-d)(p+s+2).
\end{split}
\end{equation}

Since $\det(T-\lambda I)=\lambda^2-(\text{tr }T)\lambda+1$, we see that
\[\det(T-I)=2-\text{tr }T=-\text{tr}(T-I)=p+s+2.\]
If $p+s+2\ne 0$, then $\binom{m}{n}$ is of course in the $\Q$-span of $v,w$: writing $av+bw=\binom{m}{n}$, we see that
\begin{equation*}
\begin{split}
	a&=\frac{1}{p+s+2}(w_1(-x\frac{p+s+2}{d})-(d-x)(\frac{p+s+2}{d}w_1))\\
	&=-\frac{w_1x}{d}-\frac{w_1(d-x)}{d}=-w_1\in\Z\\
	b&=\frac{1}{p+s+2}(-\frac{r}{d}w_1+\frac{p+s+2}{d}(x-d)(d-x))\\
	&=\frac{1}{d(p+s+2)}(-rw_1-(p+s+2)(x-d)^2)\\
	&=\frac{1}{d(p+s+2)}(d^2+x(x-d)(p+s+2)-(x-d)^2(p+s+2))\\
	&=\frac{1}{d(p+s+2)}(d^2+d(x-d)(p+s+2))\\
	&=x-d+\frac{d}{p+s+2},
\end{split}
\end{equation*}
so $a,b\in\Z$ if and only if $p+s+2$ divides $d$.

If $p+s+2=0$, then
\[ v=\begin{pmatrix}0\\ \frac{r}{d}\end{pmatrix},w=\begin{pmatrix}0\\0\end{pmatrix},\]
and (\ref{eq:alg}) implies $rz_1=-d^2\ne 0$, so $z_1\ne 0$, and thus $\binom{m}{n}=\binom{z_1}{d-x}$ is not in the $\Z$-span of $v,w$.

To summarize, we have shown:
\begin{thm}
Let $C=\begin{pmatrix}p & q\\r & s\end{pmatrix}\in\SL(2,\Z)$ and suppose $(p,r)\ne(-1,0)$. Let $d=\gcd(r,p+1)$, and let $x,y\in\Z$ be such that $x(p+1)+yr=d$. Then there is a genus-3 Lefschetz pencil with total space homeomorphic to $M(T,I;m,n)$, with
\[ 
	T=\begin{pmatrix}
	\frac{p+1}{d}(x-d)-\frac{r}{d}(qx+sy) & \frac{p+s+2}{d}[y(x-d)+x(qx+sy)]\\
	\frac{r}{d} & 1-x\frac{p+s+2}{d}
	\end{pmatrix}
\]
and $(m,n)=(y(x-d)+x(qx+sy),d-x)$. This torus bundle has a section if and only if $p+s+2$ is nonzero and divides $d$.
\end{thm}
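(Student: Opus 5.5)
The theorem collects the computations made in this section, so the plan is to assemble them in order.

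\emph{Step 1: the pencil and its invariants.} Given $C$, realize it as a product $\phi$ of $t_{a_1}$'s and $t_{b_1}$'s. Since $a_1,b_1$ commute in the quotient, one can arrange $\phi(a_1)=a_1^pb_1^r$ and $\phi(b_1)=a_1^qb_1^s$, with $\phi$ fixing $s_1,\dots,s_4$ and the other generators. Breeding (\ref{eq:hh}) with the $\phi$-conjugate of (\ref{eq:b}) gives a positive factorization of $t_{\delta_1}t_{\delta_2}t_{\delta_3}t_{\delta_4}$. Hence there is a genus-3 pencil with $4=2g-2$ base points on $X_\phi$. As in \cite{baykur}, Section 5.1, $e=\sigma=0$, so $c_1^2=0$.

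\emph{Step 2: the fundamental group.} Simplify the vanishing-cycle relations as above. This shows that $\pi_1(X_\phi)\cong\Z^3\rtimes_A\Z$, with $A$ the matrix having rows $(-p,-q,0)$, $(-r,-s,0)$, $(0,1,1)$.

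\emph{Step 3: normal form.} With $d=\gcd(r,p+1)$ and $x(p+1)+yr=d$, define $P$ as above. Check that $\det P=\pm1$, which reduces to $x\frac{p+1}{d}+y\frac{r}{d}=1$, and compute $PAP^{-1}$. It has bottom row $(0,0,1)$.

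\emph{Step 4: homeomorphism type.} By Lemma \ref{lem:conj} and Proposition \ref{prop:torusbndl}, together with the remark following it (which gives $\binom{m}{n}=-T^{-1}\binom{\ast}{x}$), $\pi_1(X_\phi)\cong\pi_1M(T,I;m,n)$ with the stated $T,m,n$. Since $b_1\ge2$ for torus bundles, Remark \ref{rmk:scy} shows $K_{X_\phi}=0$. Corollary \ref{cor:homeo} then gives the homeomorphism. (I do not expect $X_\phi$ to be rational or ruled, since its $\pi_1$ is infinite solvable rather than a surface group.)

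\emph{Step 5: sections.} By Proposition \ref{prop:section}, $M(T,I;m,n)$ has a section iff $\binom{m}{n}$ lies in the $\Z$-span of the columns of $T-I$. The columns of $T-I$ are computed using $ps-qr=1$. We have $\det(T-I)=2-\operatorname{tr}T=p+s+2$, and the identity (\ref{eq:alg}) gives $rw_1=-d^2-x(x-d)(p+s+2)$.

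If $p+s+2\neq0$, solve $av+bw=\binom{m}{n}$ by Cramer's rule. This gives $a=-w_1$ and $b=x-d+\frac{d}{p+s+2}$, so integrality holds exactly when $(p+s+2)\mid d$.

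If $p+s+2=0$, then $w=0$ and $v=(0,r/d)^T$. Identity (\ref{eq:alg}) gives $rw_1=-d^2\neq0$ (note $d\neq0$ since $(p+1,r)\neq(0,0)$), so $m=w_1\neq0$ and no section exists.

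\emph{Main obstacle.} The delicate points are the explicit inverse of $P$ and the algebraic identities, which need $ps-qr=1$ and $x(p+1)+yr=d$ used repeatedly. The $b$-coefficient computation relies on (\ref{eq:alg}), and I would verify that identity first.
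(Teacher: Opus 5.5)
Your proposal is correct and follows the paper's own argument step for step: the breeding construction with $e=\sigma=0$, the presentation $\pi_1(X_\phi)\cong\Z^3\rtimes_A\Z$, conjugation by the same $P$, and then Proposition~\ref{prop:section} with Cramer's rule and the identity (\ref{eq:alg}) in the two cases $p+s+2\neq 0$ and $p+s+2=0$. Your aside that $X_\phi$ is not rational or ruled (its $\pi_1$ contains $\Z^3$, so it is not a surface group) makes explicit a point the paper leaves implicit, and you should likewise record that $P$ and $T$ are integral: $d\mid r$, $p\equiv -1\pmod d$ and $ps-qr=1$ force $s\equiv -1\pmod d$, hence $d\mid p+s+2$.
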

\begin{rmk}
The case where $(p,r)=(-1,0)$ produced a torus bundle with a section, which is not new since Hamada-Hayano \cite{hh} constructed, for any commuting pair $A,B\in\SL(2,\Z)$, a genus-3 Lefschetz pencil with total space homeomorphic to $M(A,B;0,0)$.
\end{rmk}

\section{Genus-3 pencils from genus-2 relations of type $(4,3)$}
The previous pencils were obtained by breeding two copies of a genus-2 relation of type $(6,2)$. It is also possible to obtain genus-3 SCY pencils by breeding relations of type $(4,3)$. We compute some examples in this section, although it is not clear how to systematically carry out the fundamental group computations like we did in the previous section.

Our first example uses Hamada's (4,3) relation (see \cite{nakamura}, Theorem 3), which was originally written down as a genus-2 Lefschetz fibration but has a lift to $\Mod(\Sigma_2^3)$, giving a genus-2 pencil with 3 basepoints \cite{huang}:
\begin{equation}
\label{eq:hamada}
	t_\sigma t_Et_\gamma t_F t_Gt_\alpha t_D = t_{d_1}t_{d_2}t_{d_3}.
\end{equation}
See Figure \ref{fig:hamada} for the vanishing cycles. 
\begin{figure}[h]
\centering{
\resizebox{130mm}{!}{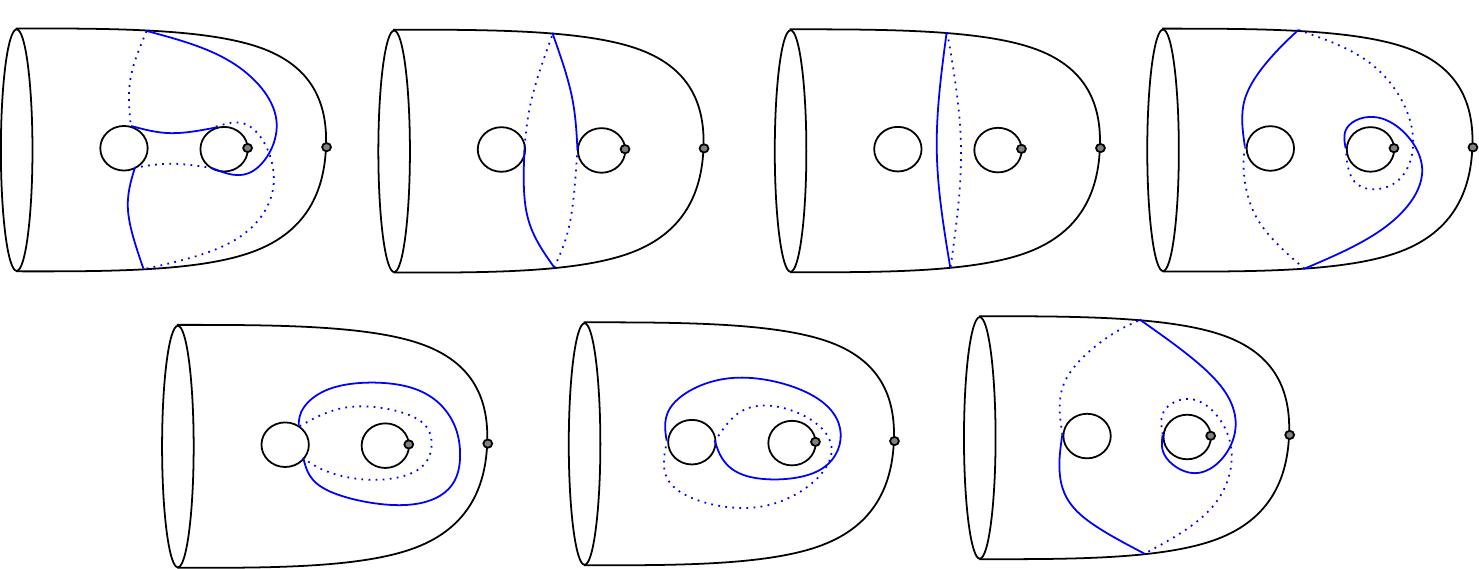}
\caption{The vanishing cycles of Hamada's genus-2 pencil.}
\label{fig:hamada}
}
\end{figure}
\begin{figure}[h]
\centering{
\resizebox{90mm}{!}{
\begingroup%
  \makeatletter%
  \providecommand\color[2][]{%
    \errmessage{(Inkscape) Color is used for the text in Inkscape, but the package 'color.sty' is not loaded}%
    \renewcommand\color[2][]{}%
  }%
  \providecommand\transparent[1]{%
    \errmessage{(Inkscape) Transparency is used (non-zero) for the text in Inkscape, but the package 'transparent.sty' is not loaded}%
    \renewcommand\transparent[1]{}%
  }%
  \providecommand\rotatebox[2]{#2}%
  \newcommand*\fsize{\dimexpr\f@size pt\relax}%
  \newcommand*\lineheight[1]{\fontsize{\fsize}{#1\fsize}\selectfont}%
  \ifx\svgwidth\undefined%
    \setlength{\unitlength}{406.76572785bp}%
    \ifx\svgscale\undefined%
      \relax%
    \else%
      \setlength{\unitlength}{\unitlength * \real{\svgscale}}%
    \fi%
  \else%
    \setlength{\unitlength}{\svgwidth}%
  \fi%
  \global\let\svgwidth\undefined%
  \global\let\svgscale\undefined%
  \makeatother%
  \begin{picture}(1,0.48043508)%
    \lineheight{1}%
    \setlength\tabcolsep{0pt}%
    \put(0,0){\includegraphics[width=\unitlength,page=1]{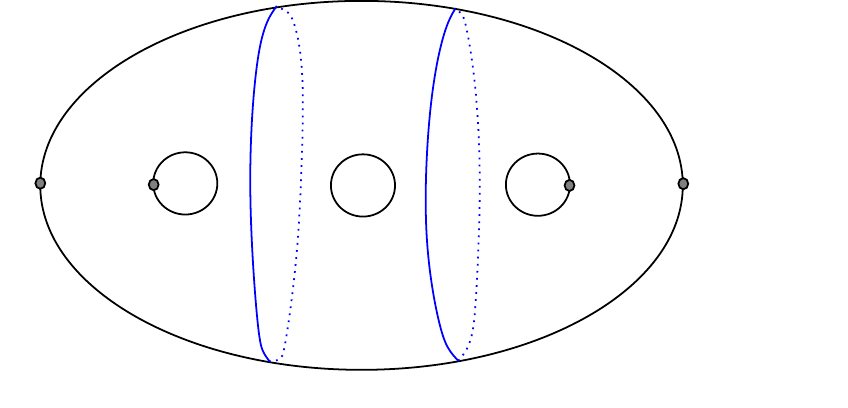}}%
    \put(0.28197964,0.00711217){\color[rgb]{0,0,0}\makebox(0,0)[lt]{\lineheight{1.25}\smash{\begin{tabular}[t]{l}$C$\end{tabular}}}}%
    \put(0.51219127,0.00711217){\color[rgb]{0,0,0}\makebox(0,0)[lt]{\lineheight{1.25}\smash{\begin{tabular}[t]{l}$C'$\end{tabular}}}}%
    \put(-0.00155956,0.28109554){\color[rgb]{0,0,0}\makebox(0,0)[lt]{\lineheight{1.25}\smash{\begin{tabular}[t]{l}$\delta_1$\end{tabular}}}}%
    \put(0.14560456,0.22432573){\color[rgb]{0,0,0}\makebox(0,0)[lt]{\lineheight{1.25}\smash{\begin{tabular}[t]{l}$\delta_3$\end{tabular}}}}%
    \put(0.66892173,0.28449313){\color[rgb]{0,0,0}\makebox(0,0)[lt]{\lineheight{1.25}\smash{\begin{tabular}[t]{l}$\delta_4$\end{tabular}}}}%
    \put(0.81256353,0.22526636){\color[rgb]{0,0,0}\makebox(0,0)[lt]{\lineheight{1.25}\smash{\begin{tabular}[t]{l}$\delta_2$\end{tabular}}}}%
  \end{picture}%
\endgroup%
}
\caption{Curves specifying embeddings $\Sigma_2^3\hookrightarrow \Sigma_3^4$.}
\label{fig:genus3}
}
\end{figure}

Figure \ref{fig:genus3} describes two embeddings of $\Sigma_2^3$ into $\Sigma_3^4$: one embedding sends $d_1\mapsto C, \sigma\mapsto C', d_2\mapsto\delta_4$, and $d_3\mapsto\delta_2$; the other embedding sends $d_1\mapsto C', \sigma\mapsto C, d_2\mapsto\delta_3$, and $d_3\mapsto\delta_1$. Under these embeddings, the relation (\ref{eq:hamada}) becomes
\begin{equation}
\label{eq:hamadaright}
	t_C t_Et_\gamma t_F t_Gt_\alpha t_D = t_{C'}t_{\delta_4}t_{\delta_2},
\end{equation}
\begin{equation}
\label{eq:hamadaleft}
	t_{C'} t_{E'}t_{\gamma'} t_{F'} t_{G'}t_{\alpha'} t_{D'} = t_{C}t_{\delta_3}t_{\delta_1}.
\end{equation}
Se Figures \ref{fig:unprimed} and \ref{fig:primed} for these curves.
\begin{figure}[h]
\centering{
\resizebox{130mm}{!}{
\begingroup%
  \makeatletter%
  \providecommand\color[2][]{%
    \errmessage{(Inkscape) Color is used for the text in Inkscape, but the package 'color.sty' is not loaded}%
    \renewcommand\color[2][]{}%
  }%
  \providecommand\transparent[1]{%
    \errmessage{(Inkscape) Transparency is used (non-zero) for the text in Inkscape, but the package 'transparent.sty' is not loaded}%
    \renewcommand\transparent[1]{}%
  }%
  \providecommand\rotatebox[2]{#2}%
  \newcommand*\fsize{\dimexpr\f@size pt\relax}%
  \newcommand*\lineheight[1]{\fontsize{\fsize}{#1\fsize}\selectfont}%
  \ifx\svgwidth\undefined%
    \setlength{\unitlength}{756.05401467bp}%
    \ifx\svgscale\undefined%
      \relax%
    \else%
      \setlength{\unitlength}{\unitlength * \real{\svgscale}}%
    \fi%
  \else%
    \setlength{\unitlength}{\svgwidth}%
  \fi%
  \global\let\svgwidth\undefined%
  \global\let\svgscale\undefined%
  \makeatother%
  \begin{picture}(1,0.45871053)%
    \lineheight{1}%
    \setlength\tabcolsep{0pt}%
    \put(0,0){\includegraphics[width=\unitlength,page=1]{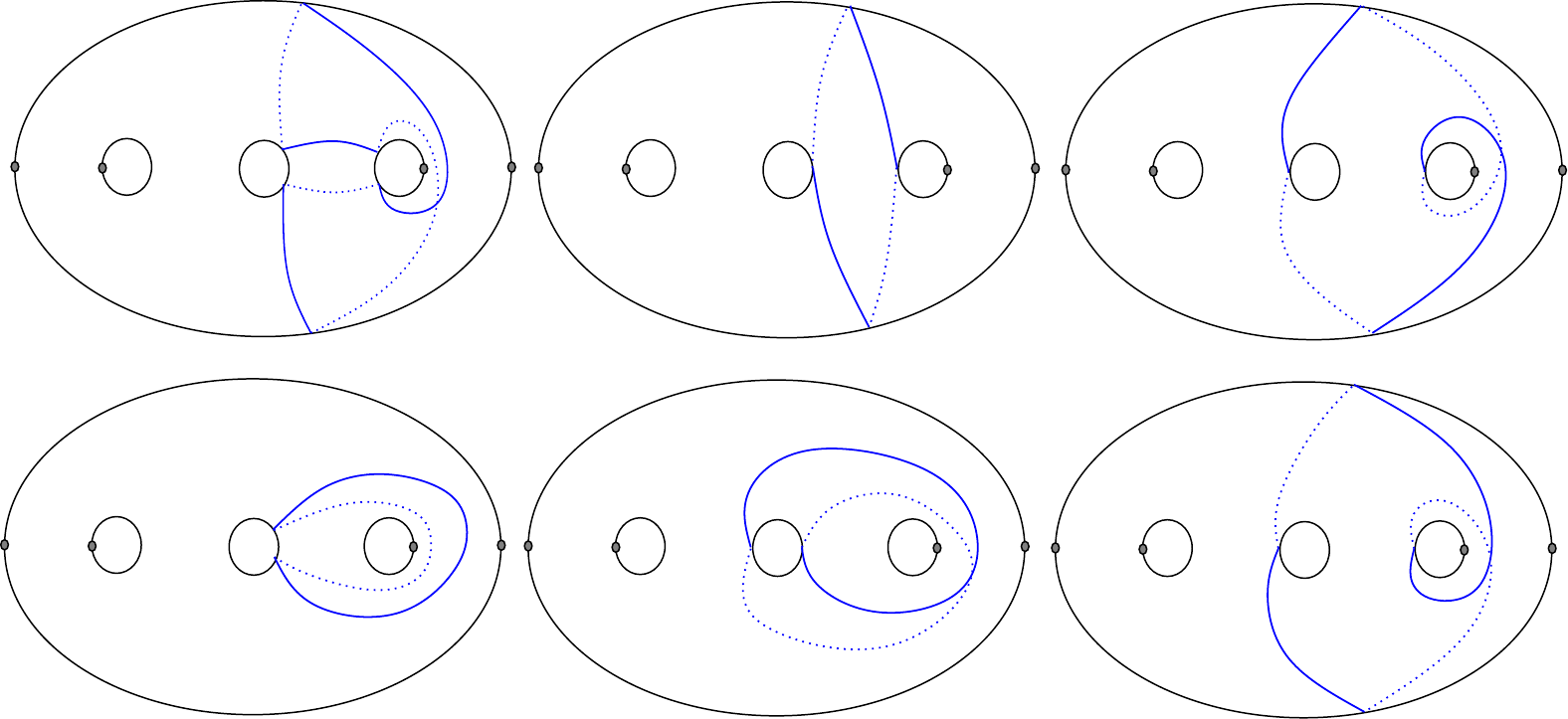}}%
    \put(0.07455854,0.30010861){\color[rgb]{0,0,0}\makebox(0,0)[lt]{\lineheight{1.25}\smash{\begin{tabular}[t]{l}$\alpha$\end{tabular}}}}%
    \put(0.40578719,0.29964555){\color[rgb]{0,0,0}\makebox(0,0)[lt]{\lineheight{1.25}\smash{\begin{tabular}[t]{l}$D$\end{tabular}}}}%
    \put(0.72427494,0.30672319){\color[rgb]{0,0,0}\makebox(0,0)[lt]{\lineheight{1.25}\smash{\begin{tabular}[t]{l}$E$\end{tabular}}}}%
    \put(0.08670851,0.05075422){\color[rgb]{0,0,0}\makebox(0,0)[lt]{\lineheight{1.25}\smash{\begin{tabular}[t]{l}$\gamma$\end{tabular}}}}%
    \put(0.38809214,0.05900985){\color[rgb]{0,0,0}\makebox(0,0)[lt]{\lineheight{1.25}\smash{\begin{tabular}[t]{l}$F$\end{tabular}}}}%
    \put(0.7266348,0.05900985){\color[rgb]{0,0,0}\makebox(0,0)[lt]{\lineheight{1.25}\smash{\begin{tabular}[t]{l}$G$\end{tabular}}}}%
  \end{picture}%
\endgroup%
}
\caption{Unprimed curves in our pencil (\ref{eq:hamadabreed}).}
\label{fig:unprimed}
}
\end{figure}
\begin{figure}[h]
\centering{
\resizebox{130mm}{!}{
\begingroup%
  \makeatletter%
  \providecommand\color[2][]{%
    \errmessage{(Inkscape) Color is used for the text in Inkscape, but the package 'color.sty' is not loaded}%
    \renewcommand\color[2][]{}%
  }%
  \providecommand\transparent[1]{%
    \errmessage{(Inkscape) Transparency is used (non-zero) for the text in Inkscape, but the package 'transparent.sty' is not loaded}%
    \renewcommand\transparent[1]{}%
  }%
  \providecommand\rotatebox[2]{#2}%
  \newcommand*\fsize{\dimexpr\f@size pt\relax}%
  \newcommand*\lineheight[1]{\fontsize{\fsize}{#1\fsize}\selectfont}%
  \ifx\svgwidth\undefined%
    \setlength{\unitlength}{756.05401467bp}%
    \ifx\svgscale\undefined%
      \relax%
    \else%
      \setlength{\unitlength}{\unitlength * \real{\svgscale}}%
    \fi%
  \else%
    \setlength{\unitlength}{\svgwidth}%
  \fi%
  \global\let\svgwidth\undefined%
  \global\let\svgscale\undefined%
  \makeatother%
  \begin{picture}(1,0.45871053)%
    \lineheight{1}%
    \setlength\tabcolsep{0pt}%
    \put(0,0){\includegraphics[width=\unitlength,page=1]{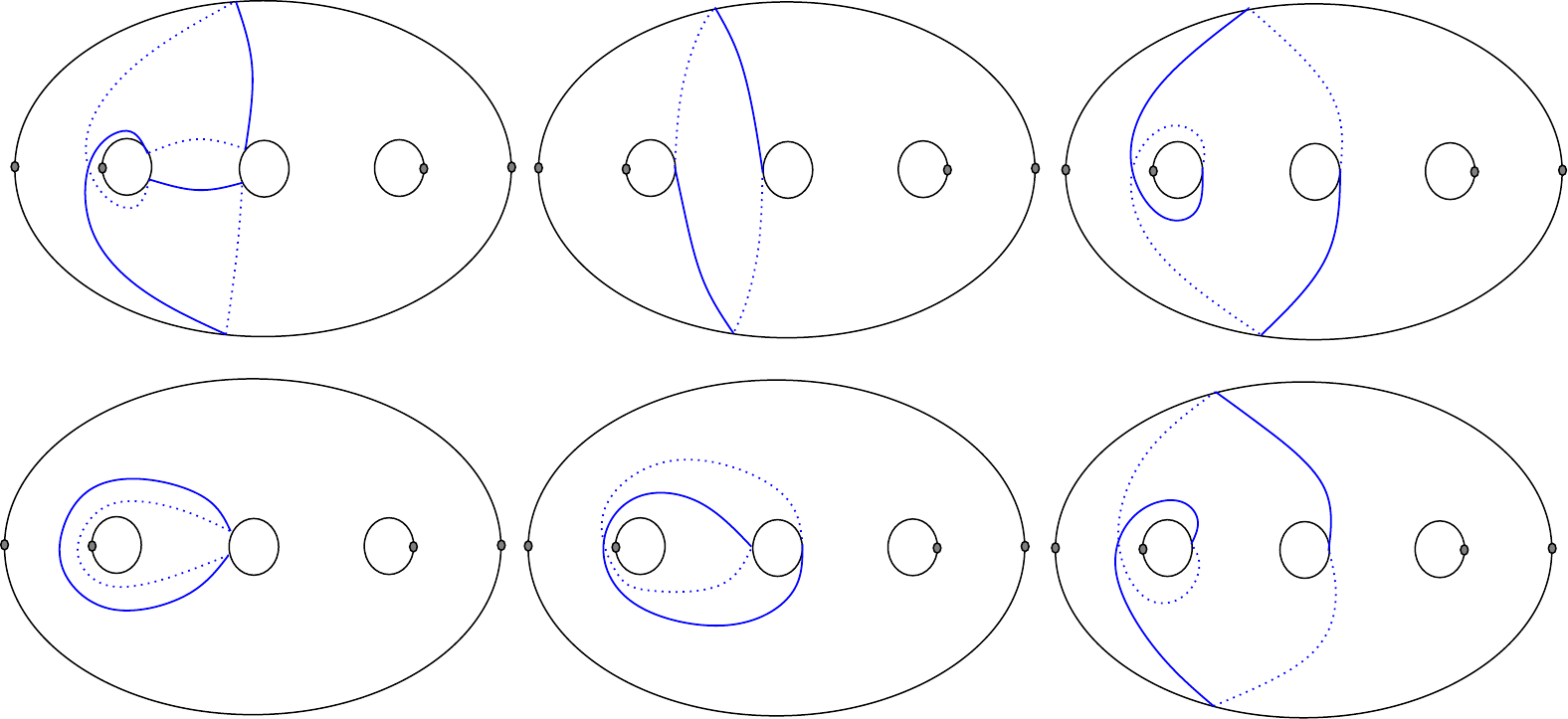}}%
    \put(0.17256431,0.29946944){\color[rgb]{0,0,0}\makebox(0,0)[lt]{\lineheight{1.25}\smash{\begin{tabular}[t]{l}$\alpha'$\end{tabular}}}}%
    \put(0.5099561,0.29613309){\color[rgb]{0,0,0}\makebox(0,0)[lt]{\lineheight{1.25}\smash{\begin{tabular}[t]{l}$D'$\end{tabular}}}}%
    \put(0.87111891,0.29988794){\color[rgb]{0,0,0}\makebox(0,0)[lt]{\lineheight{1.25}\smash{\begin{tabular}[t]{l}$E'$\end{tabular}}}}%
    \put(0.13836769,0.05424577){\color[rgb]{0,0,0}\makebox(0,0)[lt]{\lineheight{1.25}\smash{\begin{tabular}[t]{l}$\gamma'$\end{tabular}}}}%
    \put(0.51579569,0.05382921){\color[rgb]{0,0,0}\makebox(0,0)[lt]{\lineheight{1.25}\smash{\begin{tabular}[t]{l}$F'$\end{tabular}}}}%
    \put(0.87904323,0.06133503){\color[rgb]{0,0,0}\makebox(0,0)[lt]{\lineheight{1.25}\smash{\begin{tabular}[t]{l}$G'$\end{tabular}}}}%
  \end{picture}%
\endgroup%
}
\caption{Primed curves in our pencil (\ref{eq:hamadabreed}).}
\label{fig:primed}
}
\end{figure}

For any mapping class $\phi\in\Mod(\Sigma_3^4)$ that fixes $C'$ and $C$, we may conjugate (\ref{eq:hamadaright}) by $\phi$ to obtain
\[
	t_C t_{\phi(E)}t_{\phi(\gamma)} t_{\phi(F)} t_{\phi(G)}t_{\phi(\alpha)} t_{\phi(D)} = t_{C'}t_{\delta_4}t_{\delta_2}.
\]
Moving $t_C$ to the right in the above equation, and moving $t_{C'}$ to the right in (\ref{eq:hamadaleft}), we may breed to obtain the following genus-3 Lefschetz pencil with 4-basepoints.
\begin{equation}
\label{eq:hamadabreed}
	t_{E'}t_{\gamma'} t_{F'} t_{G'}t_{\alpha'} t_{D'}t_{\phi(E)}t_{\phi(\gamma)} t_{\phi(F)} t_{\phi(G)}t_{\phi(\alpha)} t_{\phi(D)}=t_{\delta_1}t_{\delta_2}t_{\delta_3}t_{\delta_4}
\end{equation}
Note that the total space of the Lefschetz pencil has $e=\sigma=0$, and hence $c_1^2=0$. So Remark \ref{rmk:scy} applies.

\begin{prop}
Let $\phi=t_{b_2}^k$ in the pencil (\ref{eq:hamadabreed}), and denote the total space by $X_k$.
\begin{enumerate}
\item If $k=2t$ is even then $X_k$ is homeomorphic to $M(T,I;1,0)$, where
\[
	T = \begin{pmatrix}-1 & -t\\0 & -1\end{pmatrix}.
\]
\item If $k=2t+1$ is odd then $X_k$ is homeomorphic to $M(T,I;-t,1)$, where
\[
	T = \begin{pmatrix}-1 & -k\\0 & -1\end{pmatrix}.
\]
\end{enumerate}
None of these torus bundles have sections.
\end{prop}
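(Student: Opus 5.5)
The plan follows the same template as the two earlier computations: present $\pi_1(X_k)$ as a quotient of $\pi_1(\Sigma_3)$, recognize it as a semidirect product $\Z^3\rtimes_A\Z$, conjugate $A$ into the form of Proposition \ref{prop:torusbndl}, and then conclude with Corollary \ref{cor:homeo} and Proposition \ref{prop:section}.

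\emph{Step 1: relations from the primed curves.} I would write the curves $E',\gamma',F',G',\alpha',D'$ of Figure \ref{fig:primed} as words in the standard generators $a_1,b_1,a_2,b_2,a_3,b_3$ of $\pi_1(\Sigma_3)$. I would then simplify these relations together with the surface relation, as was done for $d_7,\dots,d_{12}$. The expected outcome is two relations expressing $a_1,b_1$ (or possibly $a_3,b_3$) in terms of conjugates by $a_2$, plus a collection of commutation relations among the remaining generators. This is the analogue of (\ref{eq:hh-rels}).

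\emph{Step 2: relations from the twisted unprimed curves.} The action of $t_{b_2}^k$ on $\pi_1(\Sigma_3)$ sends $a_2\mapsto a_2b_2^{\mp k}$ and fixes the other generators; the sign is fixed by the orientation conventions of \cite{baykur}, Figure 4. Because $\phi$ moves $a_2$, and $a_2$ is the generator that should become the $\Z$-factor, $k$ will enter the monodromy. I would substitute this into the words for $E,\gamma,F,G,\alpha,D$ and simplify modulo Step 1. I expect most of these six relations to become redundant, leaving about two new relations. These should give the conjugation action of $a_2$ on a rank-three abelian subgroup generated, say, by $\{a_3,b_3,b_2\}$ after eliminating $a_1,b_1$. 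The result is a presentation $\pi_1(X_k)\cong\Z^3\rtimes_A\Z$ for an explicit $A\in\GL(3,\Z)$ depending on $k$.

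\emph{Step 3: normal form.} The statement suggests that $A$ has an invariant rank-two sublattice with action conjugate to $\begin{pmatrix}-1&*\\0&-1\end{pmatrix}$, and that $A$ acts trivially on the quotient. I would find $P\in\GL(3,\Z)$ with
\[PAP^{-1}=\begin{pmatrix}T&v\\0&1\end{pmatrix}.\]
The parity split enters here. The off-diagonal entry in the natural basis is $k$, but for even $k$ one can rebase to make it $t=k/2$, at the cost of moving a unit into the translation column. I would then read off $(m,n)=-T^{-1}v$ from the remark after Proposition \ref{prop:torusbndl}. Lemma \ref{lem:conj} then gives $\pi_1(X_k)\cong\pi_1M(T,I;m,n)$. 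Since $b_1\ge 2$ and $c_1^2=0$, Remark \ref{rmk:scy} and Corollary \ref{cor:homeo} give the homeomorphism.

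\emph{Step 4: no sections.} This is a direct check with Proposition \ref{prop:section}. For $T=\begin{pmatrix}-1&-u\\0&-1\end{pmatrix}$, the columns of $T-I$ span
\[\Bigl\langle \pvec{-2}{0},\pvec{-u}{-2}\Bigr\rangle.\]
Every vector in this span has even second coordinate. When that coordinate is $0$, the first coordinate is even as well. In the even case $(1,0)$ has first coordinate odd, so it is not in the span. In the odd case $n=1$ is odd, so $(-t,1)$ is not in the span either.

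The main obstacle is Steps 1–2. The curves must be transcribed correctly from the figures into $\pi_1$ words, and the simplification has no systematic pattern, as the paper itself admits for the $(4,3)$ case. I would first reduce the primed relations to commutators, then use them to cancel within the unprimed ones, following the strategy used for $d_7,\dots,d_{12}$.
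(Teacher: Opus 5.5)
\textbf{Steps 3--4 are fine; the gap is in Steps 1--2.} Reading off $(m,n)=-T^{-1}v$, invoking Remark \ref{rmk:scy} and Corollary \ref{cor:homeo}, and your parity argument that $\binom{m}{n}$ is not in the span of the columns of $T-I$ are all correct and match the paper. The problem is that Steps 1--2 are essentially the whole proof, and you do not carry them out. Worse, the shape you predict for their outcome is wrong, so the recognition step you describe would not go through.

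\emph{The primed curves.} These do not give an analogue of (\ref{eq:hh-rels}). Instead they yield $a_2=a_1^{-2}$ and $b_2=a_1b_1^{-1}a_1^{-1}$, so it is $a_2,b_2$ that get eliminated. The remaining primed relations are $[a_1^2,b_1]=[a_1b_1a_1^{-1},b_1]=1$ and $[a_1,b_1][a_3,b_3]=1$. Note that $a_1$ and $b_1$ do not commute.

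\emph{The twisted unprimed curves.} These reduce to $b_3=a_1b_1a_1^{-1}=a_3b_1a_3^{-1}$, $[a_3^2,b_3]=1$, and $a_3a_1^{-2}a_3=b_1^{-k}$.

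\emph{Why your predicted structure fails.} $a_2$ cannot generate the $\Z$-factor in any splitting, because every homomorphism $\pi_1(X_k)\to\Z$ sends $a_2=a_1^{-2}$ to an even integer. Likewise $\{a_3,b_3,b_2\}$ does not generate an abelian group: $b_2=b_3^{-1}$, and $a_3b_3a_3^{-1}=b_1\neq b_3$.

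\emph{The missing idea.} You need the change of generators $c=a_3^{-1}a_1$. In terms of $c,b_1,a_3,b_3$ the relations become
\[
[c,b_1]=[c,b_3]=[b_1,b_3]=1,\quad a_3b_1a_3^{-1}=b_3,\quad a_3b_3a_3^{-1}=b_1,\quad a_3ca_3^{-1}=c^{-1}b_1^k .
\]
You must also verify the converse, that these relations imply the old ones, so that the two presentations agree. This exhibits $\pi_1(X_k)\cong\Z^3\rtimes_A\Z$ with $\Z^3=\langle b_1,b_3,c\rangle$, $\Z=\langle a_3\rangle$, and $A=\begin{pmatrix}0&1&k\\1&0&0\\0&0&-1\end{pmatrix}$.

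\emph{The parity split.} This also shows your account of it in Step 3 is off. It is not a rebasing of a $2\times2$ block with entry $k$, since in the natural basis the $b_1,b_3$ block is a swap. The rank-two invariant sublattice on which $A$ acts as $\begin{pmatrix}-1&*\\0&-1\end{pmatrix}$ is forced to be $\ker(A+I)^2$. In coordinates with respect to $b_1,b_3,c$ this is $\{2x+2y+kz=0\}$. Its basis, and hence whether the off-diagonal entry is $t$ or $k$, genuinely depends on the parity of $k$. The paper handles the two cases with explicit conjugating matrices $P$.
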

\begin{proof}
Writing the primed curves in terms of the standard basis for $\pi_1(\Sigma_3)$, we get the following relations in $\pi_1(X_k)$:
\[ a_1b_1^{-1}a_2b_2^{-1}a_2^{-1}b_1a_1^{-1}[b_3,a_3]b_2=1 \tag{$\alpha'$} \]
\[ [b_3,a_3]b_2b_1=1 \tag{$D'$} \]
\[ a_1b_1a_1a_2b_2=1 \tag{$E'$} \]
\[ a_1a_2b_2^{-1}a_2^{-1}b_1a_1^{-1}b_1^{-1}a_2b_2a_2^{-1}=1 \tag{$\gamma'$} \]
\[ a_1a_2b_2^{-1}a_2^{-1}b_1a_1b_1^{-1}a_2b_2=1 \tag{$F'$} \]
\[a_1^2b_1^{-1}a_2b_2^{-1}[a_3,b_3]=1 \tag{$G'$} \]
Using ($D'$), we see that $b_1^{-1}=[b_3,a_3]b_2$. This allows us to simplify ($\alpha'$) and ($G'$) by getting rid of the commutators:
\[ a_1b_1^{-1}a_2b_2^{-1}a_2^{-1}b_1a_1^{-1}b_1^{-1}=1 \tag{$\alpha'$} \]
\[ a_1^2b_1^{-1}a_2b_1=1 \tag{$G'$} \]
Using ($E'$) to replace $a_2b_2$ at the end of ($F'$), we get
\[ a_1a_2b_2^{-1}a_2^{-1}b_1a_1b_1^{-1}a_1^{-1}b_1^{-1}a_1^{-1}=1 \]
\[ a_2b_2^{-1}a_2^{-1}b_1a_1b_1^{-1}a_1^{-1}b_1^{-1}=1 \tag{$F'$} \]
Relations ($\alpha'$) and ($F'$) both contain the word $a_2b_2a_2^{-1}$; equating the rest of the relations ($\alpha'$) and ($F'$) gives
\[ b_1a_1^{-1}b_1^{-1}a_1b_1^{-1}=b_1a_1b_1^{-1}a_1^{-1}b_1^{-1} \]
\[ a_1^{-1}b_1^{-1}a_1=a_1b_1^{-1}a_1^{-1} \]
\[ b_1^{-1}a_1^2=a_1^2b_1^{-1} \]
so $a_1^2,b_1$ commute. Replace ($\alpha'$) with this new relation, keeping the rest in their simplified form:
\[ a_1^2b_1=b_1a_1^2 \tag{$\alpha'$} \]
\[ [b_3,a_3]b_2=b_1^{-1} \tag{$D'$} \]
\[ a_1b_1a_1a_2b_2=1 \tag{$E'$} \]
\[ a_1a_2b_2^{-1}a_2^{-1}b_1a_1^{-1}b_1^{-1}a_2b_2a_2^{-1}=1 \tag{$\gamma'$} \]
\[ a_2b_2^{-1}a_2^{-1}b_1a_1b_1^{-1}a_1^{-1}b_1^{-1}=1 \tag{$F'$} \]
\[ a_1^2b_1^{-1}a_2b_1=1 \tag{$G'$} \]
In ($G'$), we can cancel the $b_1$'s since ($\alpha'$) tells us that $a_1^2$ and $b_1$ commute. Hence ($G'$) becomes $a_2=a_1^{-2}$. Putting this in ($E'$), we see that
\[ a_1b_1a_1^{-1}b_2=1, \]
so $b_2=a_1b_1a_1^{-1}$. Now ($\alpha'$) implies that $a_2$ and $b_2$ commute, so we can cancel some terms in ($\gamma'$) and ($F'$). Our relations have now simplified to:
\[ a_1^2b_1=b_1a_1^2 \tag{$\alpha'$} \]
\[ [b_3,a_3]b_2=b_1^{-1} \tag{$D'$} \]
\[ b_2=a_1b_1^{-1}a_1^{-1} \tag{$E'$} \]
\[ a_1b_2^{-1}b_1a_1^{-1}b_1^{-1}b_2=1 \tag{$\gamma'$} \]
\[ b_2^{-1}b_1a_1b_1^{-1}a_1^{-1}b_1^{-1}=1 \tag{$F'$} \]
\[ a_2=a_1^{-2} \tag{$G'$} \]
Using ($E'$) to substitute $b_2=a_1b_1a_1^{-1}$ into ($\gamma'$), we find that
\[ a_1(a_1b_1a_1^{-1})b_1a_1^{-1}b_1^{-1}(a_1b_1^{-1}a_1^{-1})=1.\]
Cancelling the first and last $a_1$'s and using ($\alpha'$) to see that $a_1^{-1}b_1a_1=a_1b_1a_1^{-1}$, we get
\[ a_1b_1a_1^{-1}b_1a_1b_1^{-1}a_1^{-1}b_1^{-1}=1 \tag{$\gamma'$} \]
Using ($E'$) in ($F'$), we see that
\[ a_1b_1a_1^{-1}b_1a_1b_1^{-1}a_1^{-1}b_1^{-1}=1, \]
which is the same as ($\gamma'$). This lets us eliminate ($F'$) from the relations. Note that ($\gamma'$) is saying that $a_1b_1a_1^{-1}$ and $b_1$ commute. At this point, we throw in the surface relation $[a_1,b_1][a_2,b_2][a_3,b_3]=1$, which simplifies to $[a_1,b_1][a_3,b_3]=1$ since $a_2,b_2$ commute. Our relations are now
\[ a_1^2b_1=b_1a_1^2 \tag{$\alpha'$} \]
\[ [b_3,a_3]b_2=b_1^{-1} \tag{$D'$} \]
\[ b_2=a_1b_1^{-1}a_1^{-1} \tag{$E'$} \]
\[ [a_1b_1a_1^{-1},b_1]=1 \tag{$\gamma'$} \]
\[ a_2=a_1^{-2} \tag{$G'$} \]
\[ [a_1,b_1][a_3,b_3]=1 \tag{$\Sigma$} \]
Let us simplify ($D'$): using ($E'$) to substitute $b_2=a_1b_1^{-1}a_1^{-1}$ and ($\Sigma$) to substitute $[b_3,a_3]=[a_1,b_1]$ gives
\[ a_1b_1a_1^{-1}b_1^{-1}a_1b_1^{-1}a_1^{-1}=b_1^{-1}, \]
which is the same as ($\gamma'$). Hence we can eliminate ($D'$). In summary, the relations coming from the primed curves and the surface relation are:
\[ a_1^2b_1=b_1a_1^2,\; b_2=a_1b_1^{-1}a_1^{-1},\; [a_1b_1a_1^{-1},b_1]=1,\]
\[ a_2=a_1^{-2},\; [a_1,b_1][a_3,b_3]=1. \]

We now analyze the relations coming from the unprimed curves in (\ref{eq:hamadabreed}), with $\phi=t_{b_2}^k$:
\[ a_3b_3a_3^{-1}b_2b_3a_3^{-1}b_2^{-1}a_3=1 \tag{$\phi(\alpha)$} \]
\[ b_3b_2=1 \tag{$\phi(D)$} \]
\[ [b_3,a_3]b_2a_2b_2^{-k}a_3b_3a_3=1 \tag{$\phi(E)$} \]
\[ b_2^{-1}[a_3,b_3]a_3^{-1}b_2a_3=1 \tag{$\phi(\gamma)$} \]
\[ b_2^{-1}a_3[b_3,a_3]b_2a_2b_2^{-k}a_3=1 \tag{$\phi(F)$} \]
\[ a_3b_3^{-1}a_2b_2^{-k}b_2^{-1}a_3=1 \tag{$\phi(G)$} \]
Using ($\phi(D)$) to get rid of the $b_2b_3$ term in ($\phi(\alpha)$), we see that $a_3^2b_3^{-1}a_3^{-2}b_3=1$, so that $a_3^2$ and $b_3$ commute. Substituting $b_2=a_1b_1^{-1}a_1^{-1}$, ($\phi(D)$) becomes $b_3=a_1b_1a_1^{-1}$. Using ($\phi(D)$) in ($\phi(F)$) to replace $b_2^{-1}$ with $b_3$ produces ($\phi(E)$), so we can eliminate ($\phi(E)$). We use the relation ($D'$) from the primed curves to get rid of commutators in the remaining relations:
\[ a_3^2b_3=b_3a_3^2 \tag{$\phi(\alpha)$} \]
\[ b_3=a_1b_1a_1^{-1} \tag{$\phi(D)$} \]
\[ b_1a_3^{-1}b_2a_3=1 \tag{$\phi(\gamma)$} \]
\[ b_2^{-1}a_3b_1^{-1}a_2b_2^{-k}a_3=1 \tag{$\phi(F)$} \]
\[ a_3b_3^{-1}a_2b_2^{-k-1}a_3=1 \tag{$\phi(G)$} \]

The relation ($\phi(\gamma)$) implies that $b_2^{-1}a_3b_1^{-1}=a_3$, and putting this into ($\phi(F)$) gives $a_3a_2b_2^{-k}a_3=1$. Expressing $a_2$ and $b_2$ in terms of $a_1,b_1$ gives
\[ a_3a_1^{-1}b_1^ka_1^{-1}a_3=1. \tag{$\phi(F)$} \]

Substituting $b_2=a_1b_1^{-1}a_1^{-1}$ into ($\phi(\gamma)$) gives $b_1a_3^{-1}a_1b_1^{-1}a_1^{-1}a_3=1$, and this becomes $b_1a_3^{-1}b_3^{-1}a_3=1$ after using ($\phi(D)$). In other words, ($\phi(\gamma)$) has become $b_3=a_3b_1a_3^{-1}$.

Expressing $a_2$ and $b_2$ in terms of $a_1,b_1$ in ($\phi(G)$) gives
\[ a_3b_3^{-1}a_1^{-1}b_1^{k+1}a_1^{-1}a_3=1.\]
By ($\phi(F)$), we have $b_1^ka_1^{-1}a_3=a_1a_3^{-1}$, so substituting this above gives $a_3b_3^{-1}a_1^{-1}b_1a_1a_3^{-1}=1$. Cancelling the $a_3$'s gives $b_3=a_1^{-1}b_1a_1$, and since $a_1^{-1}b_1a_1=a_1b_1a_1^{-1}$ by ($\alpha'$), we get $b_3=a_1b_1a_1^{-1}$. This is precisely ($\phi(D)$), so we can get rid of ($\phi(G)$) from our relations. 

Let us write ($\phi(F)$) slightly differently: the relations $b_3=a_1b_1a_1^{-1}$ and $b_3=a_3b_1a_3^{-1}$ imply that $b_1$ commutes with $a_1^{-1}a_3$, so we can commute $b_1^k$ all the way to the right in ($\phi(F)$):
\[ a_3a_1^{-2}a_3=b_1^{-k} \tag{$\phi(F)$} \]
Summarizing, the relations from the unprimed curves collapse to:
\[ a_3^2b_3=b_3a_3^2 \tag{$\phi(\alpha)$} \]
\[ b_3=a_1b_1a_1^{-1} \tag{$\phi(D)$} \]
\[ b_3=a_3b_1a_3^{-1} \tag{$\phi(\gamma)$} \]
\[ a_3a_1^{-1}b_1^ka_1^{-1}a_3=1 \tag{$\phi(F)$} \]
Observe that the surface relation $[a_1,b_1][a_3,b_3]=1$ is equivalent to $a_1b_1a_1^{-1}b_1^{-1}=b_3a_3b_3^{-1}a_3^{-1}$, and using ($\phi(D)$) gives $b_3b_1^{-1}=b_3a_3b_3^{-1}a_3^{-1}$. Cancelling the leftmost $b_3$'s gives a relation equivalent to ($\phi(\gamma)$), so we can eliminate the surface relation.

We are done simplifying the relations; the remaining relations are ($\alpha'$), ($E'$), ($\gamma'$), ($G'$), ($\phi(\alpha)$), ($\phi(D)$), ($\phi(\gamma)$), and ($\phi(F)$). The relations ($E'$) and ($G'$) eliminate the generators $a_2,b_2$, so we are left with the following presentation of $\pi_1(X_k)$:
\[ \pi_1(X_k)\cong\langle a_1,b_1,a_3,b_3|(\alpha'),(\gamma'),(\phi(\alpha)),(\phi(D)),(\phi(\gamma)),(\phi(F))\rangle,\]
with
\[ a_1^2b_1=b_1a_1^2 \tag{$\alpha'$} \]
\[ [a_1b_1a_1^{-1},b_1]=1 \tag{$\gamma'$} \]
\[ a_3^2b_3=b_3a_3^2 \tag{$\phi(\alpha)$} \]
\[ b_3=a_1b_1a_1^{-1} \tag{$\phi(D)$} \]
\[ b_3=a_3b_1a_3^{-1} \tag{$\phi(\gamma)$} \]
\[ a_3a_1^{-2}a_3=b_1^{-k} \tag{$\phi(F)$} \]

It is not immediately clear from this presentation whether $\pi_1(X_k)$ is the of the form in Proposition \ref{prop:fundgrp}, so we make a change-of-basis. Replace the generator $a_1$ by
\[ c=a_3^{-1}a_1, \]
so that $c,b_1,a_3,b_3$ is our new generating set for $\pi_1(X_k)$.

Using the relations between $a_1,b_1,a_3$, and $b_3$, we observe the following relations between $c,b_1,a_3$, and $b_3$:
\begin{equation}
\label{eq:cb1}
\begin{split}
	cb_1c^{-1}&=a_3^{-1}a_1b_1a_1^{-1}a_3\\
	&=a_3^{-1}b_3a_3\\
	&=b_1
\end{split}
\end{equation}
\begin{equation}
\label{eq:cb3}
\begin{split}
	c^{-1}b_3c&=a_1^{-1}a_3b_3a_3^{-1}a_1\\
	&=a_1^{-1}b_1a_1\\
	&=b_3
\end{split}
\end{equation}
\[ b_1^{-k}=a_3(a_3c)^{-2}a_3=a_3c^{-1}a_3^{-1}c^{-1} \]
\[ a_3c^{-1}a_3^{-1}=b_1^{-k}c \]
\begin{equation}
\label{eq:ca3}
	a_3ca_3^{-1}=c^{-1}b_1^k
\end{equation}
\begin{equation}
\label{eq:b13}
	[b_1,b_3]=[b_1,a_1b_1a_1^{-1}]=1
\end{equation}
\begin{equation}
\label{eq:ab3}
	a_3b_3a_3^{-1}=a_3^2b_1a_3^{-2}=b_1
\end{equation}
\[ b_3=a_3b_1a_3^{-1} \tag{$\phi(\gamma)$} \]

Conversely, suppose we are only given the relations (\ref{eq:cb1}), (\ref{eq:cb3}), (\ref{eq:ca3}), (\ref{eq:b13}), (\ref{eq:ab3}), and ($\phi(\gamma)$). We will derive the relations ($\alpha'$), ($\gamma'$), ($\phi(\alpha)$), ($\phi(D)$), and ($\phi(F)$):
\begin{enumerate}
\item[$\phi(\alpha)$:] Starting with ($\phi(\gamma)$) and using (\ref{eq:ab3}), we get
\[ b_3=a_3b_1a_3^{-1}=a_3^2b_3a_3^{-2}, \]
which is ($\phi(\alpha)$).

\item[$\phi(D)$:] Using $c=a_3^{-1}a_1$ in (\ref{eq:cb1}), we get $a_3^{-1}a_1b_1=b_1a_3^{-1}a_1$, and thus $a_1b_1=a_3b_1a_3^{-1}a_1$. Using ($\phi(\gamma)$) then gives $a_1b_1=b_3a_1$, which is ($\phi(D)$).

\item[$\alpha'$:] Using $c=a_3^{-1}a_1$ in (\ref{eq:cb3}), we get $a_3^{-1}a_1b_3=b_3a_3^{-1}a_1$, and hence $a_1b_3=a_3b_3a_3^{-1}a_1$. Now use (\ref{eq:ab3}) to get $a_1b_3=b_1a_1$. Thus $b_3=a_1^{-1}b_1a_1$. We derived ($\phi(D)$) above, which says that $b_3=a_1b_1a_1^{-1}$. We see that $a_1^{-1}b_1a_1=a_1b_1a_1^{-1}$, which is equivalent to ($\alpha'$).

\item[$\gamma'$:] We derived above that $b_3=a_1b_1a_1^{-1}$. Hence the relation (\ref{eq:b13}) becomes $[b_1,a_1b_1a_1^{-1}]=1$, which is ($\gamma'$).

\item[$\phi(F)$:] In the process of deriving (\ref{eq:ca3}) from ($\phi(F)$), we only used $c=a_3^{-1}a_1$; no other relations were used. Hence (\ref{eq:ca3}) and ($\phi(F)$) are equivalent.
\end{enumerate}
Hence under the two sets of generators $a_1,b_1,a_3,b_3$ and $c=a_3^{-1}a_1,b_1,a_3,b_3$, the sets of relations ($\alpha'$)-($\phi(F)$) and (\ref{eq:cb1})-($\phi(\gamma)$) are equivalent. So
\begin{gather*}
	\pi_1(X_k)\cong \langle c,b_1,a_3,b_3|[c,b_1]=[c,b_3]=[b_1,b_3]=1,\\
	a_3b_3a_3^{-1}=b_1,a_3b_1a_3^{-1}=b_3,a_3ca_3^{-1}=c^{-1}b_1^k\rangle
\end{gather*}
From this presentation, it is clear that $\pi_1(X_k)\cong\Z^3\rtimes_A\Z$, with $\Z^3$ generated by $\{b_1,b_3,c\}$, $\Z$ generated by $a_3$, and
\[ A = \begin{pmatrix} 0 & 1 & k\\1 & 0 & 0\\ 0 & 0 & -1\end{pmatrix}. \]

If $k=2t$ is even, take
\[ P = \begin{pmatrix} 0 & 1 & 0\\0 & 0 & 1\\1 & 1 & t\end{pmatrix}, \]
so that
\[ PAP^{-1}=\begin{pmatrix} -1 & -t & 1\\0 & -1 & 0\\0 & 0 & 1\end{pmatrix}. \]
By Lemma \ref{lem:conj} and Proposotion \ref{prop:torusbndl}, $X_k\cong M(T,I;1,0)$, where
\[ T = \begin{pmatrix}-1 & -t\\0 & -1\end{pmatrix}. \]
By Proposition \ref{prop:section}, this torus bundle does not have a section.

If $k=2t+1$ is odd, take
\[ P = \begin{pmatrix} 1 & 0 & t\\1 & 1 & t\\2 & 2 & 2t+1\end{pmatrix}, \]
so that
\[ PAP^{-1} = \begin{pmatrix} -1 & -k & t+1\\0 & -1 & 1\\0 & 0 & 1\end{pmatrix}. \]
By Lemma \ref{lem:conj} and Proposition \ref{prop:torusbndl}, $X_k\cong M(T,I;-t,1)$, where
\[ T = \begin{pmatrix} -1 & -k\\0 & -1\end{pmatrix}. \]
By Proposition \ref{prop:section}, this torus bundle does not have a section.
\end{proof}
\begin{rmk}
The relations coming from the primed curves imply that $a_2$ and $b_2$ commute. Thus, given any $\begin{pmatrix} p & q\\r & s\end{pmatrix}\in\SL(2,\Z)$ we can let $\phi$ be an appropriate product of $t_{a_2}$'s and $t_{b_2}$'s so that $\phi(a_2)=a_2^pb_2^r$, $\phi(b_2)=a_2^qb_2^s$, and $\phi$ fixes the other generators. It is then not too difficult to determine the relations coming from the unprimed curves, now conjugated by $\phi$. When combined with the relations coming from the primed curves we get the following presentation for $\pi_1(X_\phi)$, where $X_\phi$ is the total space of the pencil (\ref{eq:hamadabreed}):
\begin{gather*}
	\pi_1(X_\phi)\cong\langle a_1,b_1,a_3,b_3|b_3=a_1^{2q+1}b_1^sa_1^{-1},a_3^2=a_1^{2p+1}b_1^ra_1^{-1},\\
	[a_1^2,b_1]=[a_1b_1a_1^{-1},b_1]=[a_1,b_1][a_3,b_3]=[a_3b_3a_3^{-1},b_3]=1\rangle.
\end{gather*}
It is not clear to the author how to recognize this as the fundamental group of a torus bundle.
\end{rmk}
\begin{rmk}
Taking $\phi$ to be $t_{b_3}$, $t_{b_3}t_{a_3}$, $t_{a_3}t_{b_3}t_{a_3}$, or $t_{a_3}t_{b_3}^{-1}t_{a_3}$ will all result in a fundamental group isomorphic to $\pi_1(M(-I,I;0,-1))$. It is possible, but tedious, to do these computations by hand, and we do not include them here because they are similar to our previous computations. Alternatively, one can put the original group presentations for $\pi_1(X_\phi)$ and $\pi_1(M(-I,I;0,-1))$ into a computer algebra system and see that they both simplify to the same finitely-presented group.
\end{rmk}

\end{document}